\documentclass{article}

\usepackage{arxiv}

\usepackage[utf8]{inputenc} 
\usepackage[T1]{fontenc}    
\usepackage{hyperref}       
\usepackage{url}            
\usepackage{booktabs}       
\usepackage{amsfonts}       
\usepackage{nicefrac}       
\usepackage{microtype}      
\usepackage{lipsum}
\usepackage{mathtools}
\usepackage{float}
\usepackage{xcolor}
\usepackage{amssymb}
\usepackage{mathtools}
\usepackage{cite}
\usepackage[ruled,vlined]{algorithm2e}
\usepackage{subcaption}
\usepackage{indentfirst}

\title{The Regional Boundary Reconstruction Problem 
of the Initial State for Fractional Semilinear Systems\thanks{This is a preprint of a paper 
published in 'Int. J. Appl. Comput. Math. 12 (2026), no.~2, Paper No.~37, 19~pp.' at 
[\url{https://doi.org/10.1007/s40819-026-02104-y}].}}

\author{
Khalid Zguaid\\
EMA Team, LRST Laboratory\\
The Higher School of Education and Training\\
Ibnou Zohr University\\
Agadir, Morocco \\
\texttt{k.zguaid@uiz.ac.ma} \\
\And
Fatima Zahrae El Alaoui\\
TSAN Team, TSI Laboratory\\
Faculty of Sciences\\
Moulay Ismail University\\
11201, Meknes, Morocco \\
\texttt{f.elalaoui@umi.ac.ma} \\
\And
Delfim F. M. Torres \\
Center for Research and Development in Mathematics and Applications (CIDMA),\\
Department of Mathematics,\\ University of Aveiro,\\ 3810-193 Aveiro, Portugal \\
\texttt{delfim@ua.pt} \\ 
}

\newtheorem{proposition}{\quad Proposition}[section]
\newtheorem{theorem}{\quad Theorem}[section] 
\newtheorem{definition}[theorem]{\quad Definition}

\newtheorem{remark}{\quad Remark }
\newenvironment{proof}[1][Proof]{\underline{\textbf{#1}} }{ \hfill \rule{0.5em}{0.5em}}

\begin{document}

\maketitle

\begin{abstract}
Observability is a fundamental concept in control theory. Its primary purpose is to look 
into whether it is possible to reconstruct the system's initial state using only the 
information from its outputs. This paper focuses on the regional reconstruction problem 
of the initial state for a semilinear time-fractional system, which refers to the possibility 
of recovering the value of the initial state on a desired boundary sub-region instead 
of the whole evolution domain or its boundary. To achieve this objective, we use the 
analytical method, where we suppose that the system's dynamic generates an analytic semigroup. 
First, by establishing an internal sub-region, we establish a connection between the concepts 
of regional observability and regional boundary observability; we will later explain how to 
define the internal sub-region. Then, by imposing suitable assumptions on the analytic semigroup 
and the system's non-linearity, we give the main theorems of this research from which we deduce 
a sequence that converges to the unknown initial state on the desired boundary sub-region. 
Moreover, we present an algorithm that produces some numerical simulations which align 
closely with our theoretical findings.  
\end{abstract}

\keywords{Regional Boundary Observability
\and Analytical Approach 
\and Fixed Point 
\and Semilinear Fractional Systems 
\and Caputo Derivative \and Control Theory.}


\section{Introduction}

Fractional Calculus, as a mathematical discipline, explores the realms of differentiation 
and integration beyond the confines of integer orders, delving into the intricacies of 
non-integer order differentials and integrals; this is a classical field that initially attracted 
limited applied interest because it was only a set of complicated mathematical formulas with no clear 
physical interpretation or real-world applications. This was the case until three decades ago when 
fractional calculus started appearing in various application domains. Many problems modeled by classical, 
or integer order, systems, which could not catch and describe many behaviors of those problems, 
are now being modeled by fractional order systems. The main reason is that the classical derivative 
and integral are local operators and do not consider the history or past states of the studied phenomenon, 
which does not adequately reflect many real phenomena since most phenomena depend on their past states. 
This is not the case for fractional, or non-integer, order operators, due to their hereditary and 
non-locality properties which allow all past states and experiences of the phenomenon to be taken 
into account in the current state of the considered modeling system. From the many applications 
of fractional calculus, we mention: energy harvesting in dynamical systems \cite{app2}, 
bio-medicine \cite{app5}, unconfined groundwater \cite{app3}, viscoelasticity \cite{app4} 
and solid mechanics \cite{app1}, etc. In all of these works and others, it has been proven 
that the fractional model has a much better performance than the classical one.

Recent years have witnessed a growing interest in the use of fractional-order models 
to describe complex dynamical phenomena characterized by memory and hereditary effects. 
In particular, fractional calculus has found important applications in viscoelasticity, 
structural dynamics, and seismic engineering, where classical integer-order models often 
fail to accurately reproduce experimental observations. For instance, spline collocation 
methods combined with fractional models have been successfully employed for the seismic 
analysis of multiple degree-of-freedom systems equipped with viscoelastic dampers, 
showing improved accuracy and stability when compared with classical approaches \cite{sup.1}. 
Similar techniques have also been applied to the numerical modeling of viscoelastic dampers 
in structural buildings, further demonstrating the effectiveness of fractional models 
in capturing realistic dissipative behaviors in engineering structures \cite{sup.2}.

From a theoretical standpoint, significant progress has been made in the understanding 
and unification of fractional derivatives. In particular, a general framework encompassing 
linear fractional derivatives with power-function convolution kernels has been proposed, 
providing a clear interpretation of their interpolation properties and clarifying the 
analytical roots of widely used operators such as the Caputo derivative \cite{sup.3}. 
These results offer a solid mathematical foundation for the analysis of fractional 
evolution equations and justify the widespread use of the Caputo derivative in control 
and observability problems, especially due to its natural compatibility 
with classical initial conditions.

Despite these advances, most existing works dealing with fractional-order models primarily 
focus on numerical simulations or direct applications, while comparatively fewer contributions 
address inverse problems such as state reconstruction and observability, especially for 
infinite-dimensional and semilinear systems. In particular, the problem of reconstructing 
the initial state from partial or boundary measurements in semilinear fractional systems 
remains largely unexplored. This observation further motivates the present work, which aims 
to bridge this gap by combining recent developments in fractional calculus with regional 
boundary observability techniques from control theory.

Control theory, which is the main theme of this paper, is one of the areas of applied 
mathematics in which fractional calculus has been successfully applied. In \cite{appC}, 
we find many fractional calculus applications in this domain. Control theory encompasses 
many useful concepts, such as stability, controllability, and observability. Our primary 
interest lies in the notion of observability, which involves the pursuit of reconstructing 
the initial state of the system under consideration. In \cite{kalman}, Kalman has introduced, 
for the first time, observability for finite dimensional systems. For infinite dimensional 
systems, we refer to this concept sometimes as global observability, and it has been widely 
developed for various kinds of systems \cite{curtain,weiss}.  Additionally, it is extended 
to a broader case known as regional observability \cite{reg,me.2023.chap,me.2023.ajc,me.2023.ijdyc}, 
where the objective is always to identify and restore the initial state but only in a specific 
sub-region of the entire evolution domain, which is a more practical notion since it costs less 
to reconstruct the initial state regionally than globally and also since it is not always possible 
to reconstruct the initial state in the whole evolution domain. Regional observability was developed 
to the case where the desired sub-region is a part of the boundary (we refer to this as regional 
boundary observability), see \cite{bound1,bound2}. This concept finds its roots in actual problems 
faced in reality, specifically in the determination of energy exchanged in a casting plasma 
on a plane target perpendicular to the flow direction, based on measurements conducted using 
thermocouples, see \cite{bound1}. It was first investigated for linear systems \cite{bound2}; 
afterwards, it was extended to cover semilinear systems, see \cite{boun.semi1} 
for parabolic systems and \cite{boun.semi2} for hyperbolic ones. Regional observability 
and regional boundary observability were also treated for fractional systems, 
see \cite{RegAnal,me.bound.lin.2022.2} for time-fractional linear systems 
and \cite{me.semi.2021,me.semi.lin.2023} for semilinear ones.

Although regional observability and regional boundary observability have been extensively 
studied for integer-order systems, as well as for linear fractional systems, existing results 
for semilinear time-fractional systems remain limited and largely focused on internal 
reconstruction problems. In particular, the boundary reconstruction of the initial state 
for semilinear fractional systems has not been thoroughly investigated, mainly due to the 
intrinsic difficulties associated with boundary measurements, non-linearity, and the nonlocal 
nature of fractional derivatives. Moreover, direct reconstruction on the boundary often leads 
to instability and lack of robustness. These limitations motivate the need for a new framework 
capable of overcoming such difficulties by combining internal observability concepts 
with boundary reconstruction objectives. The present paper addresses this gap by proposing 
an indirect yet rigorous approach that links boundary observability to internal regional 
observability, thereby enabling a stable and effective reconstruction of the initial 
state on a prescribed boundary sub-region.

This work addresses the problem of reconstructing the initial state of a semilinear 
time-fractional system, governed by a Caputo derivative, on a prescribed boundary 
sub-region. The main novelty of this paper lies in extending the concept of regional 
boundary observability to semilinear time-fractional systems, a setting that has not 
been fully explored in the existing literature. A key contribution consists in 
establishing a rigorous theoretical link between regional boundary observability 
on a boundary sub-region and approximate regional observability of the linear part 
of the system on a suitably chosen internal sub-region whose boundary contains the 
region of interest. Based on this connection, we develop a reconstruction strategy 
relying on the Hilbert Uniqueness Method (HUM), leading to an explicit iterative 
algorithm for recovering the initial state in the internal sub-region and, consequently, 
on the desired boundary portion. The effectiveness and robustness of the proposed approach 
are validated through numerical simulations involving both zonal and pointwise sensors, 
illustrating the practical relevance of the theoretical results.

The present manuscript is constructed as follows: Section two states 
some preliminary notions that cover the needed notions to understand this paper better. 
In the third section, we present the considered system and the related results regarding 
the regional boundary observability, on $\Sigma$, of that system, in particular we show 
how to link the regional boundary observability of the considered system and the 
regional observability of its linear part, in $\omega$. In the same section, 
we use some results of regional observability of the considered semilinear system 
with the goal of reconstructing the initial state in $\omega$, and we give an 
algorithm that allows that reconstruction numerically. 
The fourth section is dedicated to demonstrating the efficacy of the proposed algorithm 
through the presentation of two numerical examples. Each example showcases a different 
type of sensor (zonal and pointwise), highlighting the algorithm's effectiveness.    


\section{Preliminary Tools}

Let $\Omega$ be an open and bounded set included in $\mathbb{R}^n$, with $n\geq 2$, and $\partial\Omega$ its boundary. 
Let us fix $]0,b]$ to be our time interval. We take $\Sigma$ to be a subset of the boundary $\partial\Omega$. 
Let $X=H^1(\Omega)$ be the state space and $\mathcal{O}$ a Hilbert space called the observation space. 
In this section, we present some preliminary material regarding the Caputo fractional derivative, 
the trace operator as well as its right inverse, the analytic semigroup, and the restriction operator. 
Moreover, we define a weighted Lebesgue space to be used later. Let us start by giving 
a brief reminder of some function spaces.\\
- The space of continuous functions from $(0,b)$ to $X$: 
$$ 
C(0,b;X) = \left\{ f : (0,b) \longrightarrow X \ | \ f \mbox{ is continuous} \right\}. 
$$
- The space of absolutely continuous functions from $(0,b)$ to $X$: 
$$ 
AC(0,b;X) = \left\{ f : (0,b) \longrightarrow X \ | \ f \mbox{ is absolutely continuous} \right\}. 
$$
- The Lebesgue spaces $L^r(0,b;X)$, with $r>0$: 
$$ 
L^r(0,b;X) = \left\{ f : (0,b) \longrightarrow X \ \mid \ \int_{0}^{b} \|f(t)\|_X^r dt <+\infty \right\}. 
$$
This space is a Banach space with the norm $\|f\|_{_{L^r(0,b;X)}} = \left(\int_{0}^{b} \|f(t)\|_X^r dt\right)^\frac1r$.

Let us now define the left-sided Caputo time-fractional derivative. By time-fractional derivative, 
we mean that the differentiation is with respect to the time variable.

\begin{definition}\cite{RegAnal}
Let $z$ be in $AC(0,b;X)$. We call the left-sided time-fractional derivative, 
of $z$, of order $\alpha$, in the sense of Caputo, the following formula:	
\begin{equation*}
^{^C}\mathcal{D}^{^\alpha}_{_{0^+}} z(t,x) = \dfrac{1}{\Gamma(1-\alpha)}
\displaystyle\int_{0}^{t}(t-s)^{-\alpha}\dfrac{\partial}{\partial s}z(s,x)ds, 
\quad (t,x)\in]0,b]\times \Omega, \ \alpha \in ]0,1[. 
\end{equation*}
If $\alpha = 1$, then: 
$$
^{^C}\mathcal{D}^{^1}_{_{0^+}} z(t,x) = \dfrac{\partial}{\partial t}z(x,t).
$$
\end{definition}

We now define the trace operator and give a property regarding its right inverse. 

\begin{definition}\cite{adams}
We call the trace operator, of order zero, on $\partial\Omega$, the following operator
\begin{equation*}
\begin{array}{lllll}
\gamma_0& : & H^1(\Omega)& \longrightarrow & H^{\frac{1}{2}}(\partial\Omega) \\
& & \hfill u& \longmapsto&u_{|_{\partial\Omega}},
\end{array}
\end{equation*}
\end{definition}

\begin{proposition}\label{R}\cite{adams}
The trace operator has a right inverse, denoted by $\mathcal{R}$, that is,
$$ \forall v\in H^{\frac{1}{2}}(\partial\Omega) , \quad \gamma_0(\mathcal{R}v) = v.$$
\end{proposition}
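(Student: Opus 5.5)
The plan is to derive the existence of $\mathcal{R}$ from two facts: the trace operator $\gamma_0:H^1(\Omega)\to H^{\frac12}(\partial\Omega)$ is continuous and surjective, and $H^1(\Omega)$ is a Hilbert space. Throughout I assume, as is implicit in the use of $\gamma_0$, that $\partial\Omega$ is Lipschitz (or $C^1$), so that the trace theorem of \cite{adams} applies. Under this assumption, $H^{\frac12}(\partial\Omega)$ can be defined as the range $\gamma_0(H^1(\Omega))$, with the quotient norm
$$
\|v\|_{H^{\frac12}(\partial\Omega)}=\inf\{\|u\|_{H^1(\Omega)} \ : \ \gamma_0 u=v\}.
$$
With this norm, surjectivity holds by definition. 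If $H^{\frac12}(\partial\Omega)$ is instead defined intrinsically, through the Slobodeckij seminorm on local charts, the trace theorem states that the two norms are equivalent. Showing that every such $v$ is a trace is the genuinely nontrivial content; it is precisely the inverse-trace part of \cite{adams}, and I would quote it rather than reprove it.

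The construction is then abstract. Continuity of $\gamma_0$ makes $N=\ker\gamma_0=H^1_0(\Omega)$ a closed subspace of $H^1(\Omega)$, which gives the orthogonal decomposition $H^1(\Omega)=N\oplus N^{\perp}$. The restriction $\gamma_0|_{N^{\perp}}:N^{\perp}\to H^{\frac12}(\partial\Omega)$ is injective, since its kernel is $N\cap N^\perp=\{0\}$, and it is surjective, since for any $u$ with $\gamma_0u=v$ its projection $P_{N^\perp}u$ has the same trace. By the bounded inverse theorem, this restriction has a bounded inverse, and I set $\mathcal{R}:=(\gamma_0|_{N^\perp})^{-1}$.

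By construction $\gamma_0(\mathcal{R}v)=v$ for all $v$, and $\mathcal{R}$ is linear and continuous. In fact, $\mathcal{R}v$ is the minimal-norm element of $\gamma_0^{-1}(v)$, so $\|\mathcal{R}v\|_{H^1}=\|v\|_{H^{\frac12}}$ for the quotient norm. Concretely, $\mathcal{R}v$ is the weak solution $w$ of $-\Delta w+w=0$ in $\Omega$ with $w=v$ on $\partial\Omega$, since $N^\perp$ consists exactly of such functions. This description gives an explicit formula for $\mathcal{R}$, which is likely what is useful later in the paper.

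The only real obstacle is the surjectivity of $\gamma_0$ onto the intrinsically defined $H^{\frac12}(\partial\Omega)$. A direct proof would proceed as follows: localize with a partition of unity, flatten the boundary through bi-Lipschitz charts, and on the half-space extend $g\in H^{\frac12}(\mathbb{R}^{n-1})$ by the Poisson-type formula $\hat u(\xi',x_n)=\hat g(\xi')e^{-(1+|\xi'|)x_n}$. One then checks that $\int(1+|\xi'|^2)^{\frac12}|\hat g|^2\,d\xi'$ controls $\|u\|_{H^1}$. I would defer this step to \cite{adams}.
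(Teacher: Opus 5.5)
Your proposal is correct and takes essentially the paper's approach. The paper gives no proof beyond citing \cite{adams}, and your argument also rests on the trace theorem from \cite{adams}. Your orthogonal-complement construction adds two things the paper leaves implicit: it makes $\mathcal{R}$ a bounded linear, minimal-norm right inverse ($\mathcal{R}v$ is the weak solution of $-\Delta w+w=0$ with trace $v$), and it states the Lipschitz boundary assumption that the trace theorem requires.
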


As for the restriction operator, we give the following definition.

\begin{definition}\cite{boun.semi1}
The restriction operator, from $\partial\Omega$, on $\Sigma$, is defined as follows,
\begin{equation*}
\begin{array}{lllll}
\chi_{_{\Sigma}}& : & H^{\frac{1}{2}}(\partial\Omega)& \longrightarrow & H^{\frac{1}{2}}(\Sigma) \\
& & \hfill v& \longmapsto&v_{|_{\Sigma}},
\end{array}
\end{equation*}
\end{definition}

The upcoming definition concerns the analytic semigroup.

\begin{definition}\cite{geo}
We say that a $C_0-$semigroup, $\left\{Q(t)\right\}_{t\geq 0}$, is analytic if:\\
The mapping $ t \xrightarrow{\hspace*{0.5cm}} Q(t)x $ is analytic, for all $x$ in $X$. 
\end{definition}

Let us consider $q\geq 1$ and $\beta \leq 0$, the following weighted Lebesgue space,
$$ 
L^{^q}_{_\beta}[0,b] := \left\{ f : [0,b] \longrightarrow \mathbb{K}  \mbox{ measurable }  \ \Big| \ 
\displaystyle\int_{0}^{b}|t^\beta f(t)|^qdt \leq +\infty \right\},
$$
endowed with norm,
$$
\|f\|_{_{L^{^q}_{_\beta}[0,b]}} := \left[\displaystyle\int_{0}^{b}|t^\beta f(t)|^qdt\right]^{\frac{1}{q}},
$$
is a Banach space, see \cite{adams}.


\section{Considered System and Regional Boundary Observability}

In this section, we give the general formulation of the considered problem 
as well as all the necessary components for a better understanding. Moreover, 
we shed light on the studied concept, namely regional boundary observability, 
and we give the main results of this paper.\\
Let us consider the following time-fractional system.
\begin{equation}\label{sys.semi.lin}
\left\{\begin{array}{lll}
^{^C}\mathcal{D}_{_{0^+}}^{^\alpha} z(x,t) = \mathcal{A}z(x,t)  + Sz(x,t)  & in
\ \Omega\times]0,b] , \\
\dfrac{\partial z}{\partial\nu_\mathcal{A}}(\xi,t) = 0 & on \ \partial\Omega\times]0,b], \\ 
z(x,0) = z_0(x) & in \ \Omega,
\end{array}  \right.
\end{equation}
and we call its linear part, the following system,
\begin{equation}\label{sys.lin}
\left\{\begin{array}{lll}
^{^C}\mathcal{D}_{_{0^+}}^{^\alpha} z(x,t) = \mathcal{A}z(x,t)   & in
\ \Omega\times]0,b] , \\
\dfrac{\partial z}{\partial\nu_\mathcal{A}}(\xi,t) = 0 & on \ \partial\Omega\times]0,b], \\ 
z(x,0) = z_0(x) & in \ \Omega,
\end{array}  \right.
\end{equation}
where $(\mathcal{A},\mathcal{D}(A))$ is a linear, second-order, differential 
operator which generates an analytic semigroup, 
$\left\{\mathcal{Q}(t)\right\}_{t\geq 0}$ on $X$, 
$\dfrac{\partial z}{\partial\nu_\mathcal{A}}(\xi,t)$ is the co-normal derivative 
of $z$ with respect to the operator $\mathcal{A}$ \cite{lions.mag}, 
$z_0$ is the unknown initial state, and 
$S : L^2(0,b;X) \xrightarrow{\hspace*{0.5cm}} L^2(0,b;X)$, is a nonlinear operator.\\
To carry on with our work, we will denote from now on, without any loss of generality 
and if there is no confusion,  $z(t) := z(.,t)$ whenever the space variable is not relevant.\\
The following definition gives the expression of the mild solution for the system \eqref{sys.semi.lin}.

\begin{definition}\cite{Mu.2017}
We call a mild solution of system \eqref{sys.semi.lin}, any function $z\in C(0,b;X)$ 
that satisfies the following equation,
\begin{equation}\label{sol.semi.lin}
z(t) = \mathcal{P}_\alpha(t)z_0 
+ \displaystyle\int_{0}^{t}(t-s)^{\alpha-1}K_\alpha(t-s)Sz(s)ds, \ t\in ]0,b],
\end{equation}
where \quad $\mathcal{P}_\alpha(t)y 
= \displaystyle\int_{0}^{\infty}\varpi_\alpha(\theta)\mathcal{Q}(t^\alpha\theta)yd\theta 
$\label{1}, \quad and \quad $K_\alpha(t)y 
= \alpha\displaystyle\int_{0}^{\infty}\theta\varpi_\alpha(\theta)
\mathcal{Q}(t^\alpha\theta)yd\theta $, $\forall y\in X$,\\
with 
\begin{equation}\label{wrt}
\varpi_\alpha (\theta) = \displaystyle\sum_{n=1}^{\infty}
\dfrac{(-\theta)^{n-1}}{\Gamma(n)\Gamma(1-\alpha n)} , \quad \theta \geq 0, 
\end{equation}
is the Mainardi function.  
\end{definition}

The operators $\mathcal{P}_\alpha$ and $K_\alpha$, which map $X$ into itself, are linear, 
bounded, and strongly continuous, see \cite{Mu.2017}.\\
For simplicity reasons, we denote for the rest of this 
paper $\left(K_\alpha\ast Sz\right)(t) = \displaystyle\int_{0}^{t}(t-s)^{\alpha-1}
K_\alpha(t-s)Sz(s)ds.$ Hence, the mild solution of system \eqref{sys.semi.lin} can be written:
\begin{equation}
\label{sol.semi.lin.new}
z(t) = \mathcal{P}_\alpha(t)z_0 + \left(K_\alpha\ast Sz\right)(t), \ t\in ]0,b].
\end{equation}
The output equation is given as follows,
\begin{equation}\label{sys.out}
\varrho(t) = Cz(t),
\end{equation}
where, $C : X \xrightarrow{\hspace*{0.5cm}}\mathcal{O}$ is an admissible 
observation operator for $\mathcal{P}_\alpha$, that is: 
$$
\exists M>0 \ \mbox{ such that } \ \displaystyle\int_{0}^{b}\|
C\mathcal{P}_\alpha(t)y\|^2_{_\mathcal{O}}dt 
\leq M\|y\|_{_X}^2, \quad \forall y\in \mathcal{D}(A).
$$

Let $\Sigma$ be included in the boundary $\partial\Omega$, called the desired boundary 
sub-region. The primary objective of this paper is to recover the initial state value 
on $\Sigma$. We have the following definition regarding the regional 
boundary observability of the system \eqref{sys.semi.lin}.

\begin{definition}
We say that the system \eqref{sys.semi.lin}, augmented with \eqref{sys.out}, 
is $\mathcal{B}$-observable on $\Sigma$ ($\mathcal{B}$ stands for boundary), 
if we can reconstruct the initial state on the sub-region $\Sigma$.
\end{definition}

It appears that, when working on the boundary, things tend to be sensitive and not necessarily rigorous. 
Thus, direct reconstruction of the initial state on the boundary often leads to instability or noise 
amplification or does not give any result at all. We shall present an alternative way to tackle this problem. 
In order to continue our study, we shall designate a way that allows us to establish a relationship 
or connection between regional internal observability and regional boundary observability. 
This is possible by forming an internal sub-region, denoted $\omega\subset\Omega$, such that 
the desired boundary sub-region is included in its boundary, i.e. $\Sigma\subset\partial\omega$. 
Afterwards, instead of reconstructing the initial state on $\Sigma$, we reconstruct it in $\omega$ 
and we compute its projection on the boundary in order to get its value on $\Sigma$. 
   
Let $p>0$ be a small enough number for the current context, and we define:
$$ 
V_p = \underset{o\in \Sigma}{\bigcup}B(o,p) \ \mbox{ and } \ \omega = V_p\bigcap\Omega ,
$$ 
where $B(o,p)$ is the ball centered at $o$ and whose radius is $p$.

\begin{remark}
It is apparent that $\Sigma\subset\partial\omega$.
\end{remark}
 
We call the observability operator, which plays an important role in defining regional 
observability for linear systems, the following operator, 
$\Pi_\alpha : X \xrightarrow{\hspace*{0.5cm}} L^2(0,b;\mathcal{O})$, 
defined as follows, $(\Pi_\alpha u)(t) = C\mathcal{P}_\alpha(t)u, 
\ \forall u\in X,$ and we say that the linear system \eqref{sys.lin}, 
augmented with \eqref{sys.out}, is approximately $\omega$-observable if, and only if, 
\begin{equation}
\label{omega_p}
\mathcal{K}er\left(\Pi_\alpha\chi_{_{\omega}}^*\right) = \left\{0\right\}
\end{equation} 
with
\begin{equation*}
\begin{array}{lllll}
\chi_{_{\omega}} & : & H^1(\Omega) & \longrightarrow & H^1(\omega) \\
& & \hfill u & \longmapsto & u_{|_{\omega}},
\end{array}
\end{equation*}
the restriction operator in $\omega$, and where
\begin{equation*}
\begin{array}{lllll}
\chi_{_{\omega}}^*& : & H^1(\omega)& \longrightarrow&H^1(\Omega) \\
& & \hfill U & \longmapsto & \left\{\begin{array}{lll}
U & in & \omega,\\
0 & in & \Omega\setminus\omega,
\end{array}\right. 
\end{array}
\end{equation*}
is its adjoint.\\

\begin{remark}\label{chi}
For every $u$ in $X$, we have 
$\chi_{_{\omega}}^*\chi_{_{\omega}}u 
= \left\{\begin{array}{lll}
u & in & \omega\\
0 & in & \Omega\setminus\omega.
\end{array}\right.$
\end{remark}

Let us denote $\Pi_\alpha^{\omega} := \Pi_\alpha\chi_{_{\omega}}^*$. 
If system \eqref{sys.lin} is approximately $\omega$-observable, then, 
by the same arguments in \cite{capetact}, the pseudo (or generalized) 
inverse of $\Pi_\alpha^{\omega}$ is well defined and it is given as follows,
$$
\left[\Pi_\alpha^{\omega}\right]^{^\dagger} 
= \left[\left(\Pi_\alpha^{\omega}\right)^*\left(\Pi_\alpha^{\omega}
\right)\right]^{-1}\left(\Pi_\alpha^{\omega}\right)^*,
$$ 
and satisfies, 
$$ 
\left[\Pi_\alpha^{\omega}\right]^{^\dagger}\Pi_\alpha^{\omega} = Id_{_{H^1(\omega)}}.
$$

Our study will be continued in a novel space denoted $V$ and defined as follows: 
$V := \mathcal{I}m\left(\chi_{_{\omega}}\Pi_\alpha^*\right) 
= \mathcal{I}m\left(\left(\Pi_\alpha^{\omega}\right)^*\right)$. 
This new space, endowed with the norm $\|U\|_V 
= \|\Pi_\alpha\chi_{_{\omega}}^*U\|_{_{L^{^2}(0,b;\mathcal{O})}} 
= \|\Pi_\alpha^{\omega} U\|_{_{L^{^2}(0,b;\mathcal{O})}}$, is a Banach space. 
Moreover, $V$ is continuously embedded in $H^1(\omega)$. In some sense, 
we can say that $V$ is the set of all regionally observable initial states, 
and from now on, the goal is to recover the value of the initial states in $V$.

We will now give a theorem that links the approximate regional observability 
of system \eqref{sys.semi.lin} in $\omega$ and the regional boundary 
observability of system \eqref{sys.lin} on $\Sigma$.

\begin{theorem}
Assuming that system \eqref{sys.lin} is approximately $\omega$-observable, 
then system \eqref{sys.semi.lin} is $\mathcal{B}-$observable on $\Sigma$.  
\end{theorem}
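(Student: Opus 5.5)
The plan is to reduce reconstruction on $\Sigma$ to reconstruction in $\omega$ and then take traces. Since $\mathcal{B}$-observability only asks that $z_0$ restricted to $\Sigma$ be recoverable from the output, it suffices to show two things. First, $\chi_{_{\omega}} z_0$ is uniquely determined by $\varrho$. Second, the value of $z_0$ on $\Sigma$ is a well-defined function of $\chi_{_{\omega}} z_0$. The second point is geometric. By the Remark, $\Sigma\subset\partial\omega$, and $\omega=V_p\cap\Omega$ is a neighbourhood of $\Sigma$ relative to $\overline{\Omega}$. Hence, for $u\in H^1(\Omega)$, the trace $\chi_{_{\Sigma}}\gamma_0 u$ depends only on $u_{|_{\omega}}$: if $u_{|_{\omega}}=0$, then $u=0$ on $\omega$, so $\gamma_0 u=0$ on $\partial\Omega\cap\partial\omega\supset\Sigma$. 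I would formalize this by writing, for $U\in H^1(\omega)$, the map $U\mapsto \chi_{_{\Sigma}}\gamma_0(\chi_{_{\omega}}^* U)$, or more carefully the trace of $U$ taken on $\partial\omega\cap\partial\Omega$ and then restricted to $\Sigma$. One then checks, using Remark~\ref{chi}, that $\chi_{_{\Sigma}}\gamma_0 z_0$ equals this map applied to $\chi_{_{\omega}}z_0$. Here the right inverse $\mathcal{R}$ of Proposition~\ref{R} can also be used to show the identification is consistent, i.e.\ that every boundary datum on $\Sigma$ arises this way.

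For the first point I would use the mild solution \eqref{sol.semi.lin.new}. Applying $C$ gives
\[
\varrho(t)=C\mathcal{P}_\alpha(t)z_0 + C\left(K_\alpha\ast Sz\right)(t),
\]
that is,
\[
\Pi_\alpha z_0=\varrho - C(K_\alpha\ast Sz).
\]
Decomposing $z_0=\chi_{_{\omega}}^*\chi_{_{\omega}}z_0 + (z_0-\chi_{_{\omega}}^*\chi_{_{\omega}}z_0)$, I would treat the part outside $\omega$ as known, or zero, as is standard in the regional setting. This yields
\[
\Pi_\alpha^{\omega}\,\chi_{_{\omega}}z_0=\varrho - C(K_\alpha\ast Sz).
\]
Approximate $\omega$-observability, $\mathcal{K}er\,\Pi_\alpha^{\omega}=\{0\}$, gives the pseudo-inverse $[\Pi_\alpha^{\omega}]^{\dagger}$ with $[\Pi_\alpha^{\omega}]^{\dagger}\Pi_\alpha^{\omega}=Id$. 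Therefore
\[
\chi_{_{\omega}}z_0=[\Pi_\alpha^{\omega}]^{\dagger}\big(\varrho - C(K_\alpha\ast Sz)\big).
\]
This is an implicit equation, since $z$ depends on $z_0$. Its solvability and the uniqueness of its solution are what allow $\chi_{_{\omega}}z_0$ to be reconstructed. Injectivity of $\Pi_\alpha^{\omega}$ at least shows that, once the nonlinear contribution is accounted for, two initial states with the same output coincide in $\omega$.

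The main obstacle is the nonlinear term: the right-hand side contains $Sz$, which depends on the unknown. I would handle it with a fixed-point formulation on $V$. Define
\[
\Phi(U)=[\Pi_\alpha^{\omega}]^{\dagger}\big(\varrho - C(K_\alpha\ast S z_U)\big),
\]
where $z_U$ is the mild solution with initial datum $\chi_{_{\omega}}^*U$ (plus the known exterior part). Then $\chi_{_{\omega}}z_0$ is a fixed point of $\Phi$. Under a Lipschitz assumption on $S$, admissibility of $C$, and the standard bounds on $K_\alpha$ for analytic semigroups (via the weighted space $L^q_\beta$), $\Phi$ is a contraction or satisfies Schauder's hypotheses. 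That gives the unique reconstruction in $\omega$, and with the trace step above it gives the reconstruction on $\Sigma$.

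For this theorem, however, I would present only the structural argument: injectivity in $\omega$ together with the trace localization. The quantitative fixed-point conditions should be deferred to the subsequent results the paper announces.
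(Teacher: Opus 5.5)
Your proposal follows essentially the same route as the paper. You write the mild solution, apply $C$, isolate $\Pi_\alpha^{\omega}\chi_{_{\omega}}z_0$, invert with the pseudo-inverse granted by approximate $\omega$-observability, and pass to $\Sigma$ by a trace, leaving the dependence of $Sz$ on $z_0$ implicit and treating the exterior contribution $C\mathcal{P}_\alpha(\cdot)z_0^2$ as given, just as the paper does. The only difference is in the trace step: you justify $z_0^\Sigma=\tilde{\chi}_{_{\Sigma}}\tilde{\gamma}_0\chi_{_{\omega}}z_0$ directly by locality of the trace, whereas the paper detours through $z_0=\mathcal{R}z_0^{\partial\Omega}+z_0^1$ and carries an extra term $\chi_{_{\Sigma}}\gamma_0(z_0^1-z_0^2)$, which in fact vanishes by construction of $z_0^1$ and by your locality argument applied to $z_0^2$.
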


\begin{proof}
Let us first introduce another way to define the trace and restriction operators.

\begin{minipage}{0,5\textwidth}
\begin{equation*}
\begin{array}{lllll}
\tilde{\chi}_{_{\Sigma}} & : & H^{\frac{1}{2}}(\partial\omega) 
& \longrightarrow & H^{\frac{1}{2}}(\Sigma) \\
& & \hfill v & \longmapsto & v_{|_{\Sigma}},
\end{array}
\end{equation*}
\end{minipage}
\begin{minipage}{0,5\textwidth}
\begin{equation*}
\begin{array}{lllll}
\tilde{\gamma}_0 & : & H^1(\omega) & \longrightarrow & H^{\frac{1}{2}}(\partial\omega) \\
& & \hfill U & \longmapsto & U_{|_{\partial\omega}},
\end{array}
\end{equation*}
\end{minipage}
We are interested in recovering the value of $z_0$ on $\Sigma$, which we denote 
$z_0^\Sigma  := \tilde{\chi}_{_{\Sigma}}\tilde{\gamma}_0\chi_{_{\omega}}z_0 = \chi_{_{\Sigma}}\gamma_0z_0$.\\
Let us consider $z_0^{\partial\Omega} := \gamma_0z_0$. Hence $z_0$ can be decomposed in two different ways,	
\begin{equation}\label{decomp}
\begin{array}{llll}
z_0 & = & \mathcal{R}z_0^{\partial\Omega} + z_0^1 \\ 
& = & \chi_{_{\omega}}^*\chi_{_{\omega}}z_0 + z_0^2,
\end{array}
\end{equation}
where $z_0^2 = \left\{\begin{array}{lll}
0 & in & \omega\\
z_0 & in & \Omega\setminus\omega,
\end{array}\right.$
and $z_0^1$ is an element that depends on the operator $\mathcal{R}$ 
such that $z_0^1 = 0$ on $\partial\Omega$.\\
Therefore, 
\begin{equation}
\label{eq1}
\chi_{_{\omega}}^*\chi_{_{\omega}}z_0 = \mathcal{R}z_0^{\partial\Omega} + (z_0^1 - z_0^2).
\end{equation}
The mild solution of system \eqref{sys.semi.lin}, can be written 
$$
z(t) = \mathcal{P}_\alpha(t)\chi_{_{\omega}}^*\chi_{_{\omega}}z_0 
+ \mathcal{P}_\alpha(t) z_0^2 + \left(K_\alpha\ast Sz\right)(t),
$$
by applying the operator $C$ and after rearranging the terms, we get 
$$
\left(\Pi_\alpha^{\omega}\chi_{_{\omega}}z_0\right)(t) 
= \varrho(t) - C\left(\Pi_\alpha z_0^2\right)(t) - C\left(K_\alpha\ast Sz\right)(t),
$$
since system \eqref{sys.lin} is approximately $\omega$-observable, then, 
we can apply the pseudo inverse of $\Pi_\alpha^{\omega}$, thus
$$ 
\chi_{_{\omega}}z_0 = \left[\Pi_\alpha^{\omega}\right]^{^\dagger}
\left(\varrho(\cdot) - C\left(\Pi_\alpha z_0^2\right)(\cdot) 
- C\left(K_\alpha\ast Sz\right)(\cdot)\right), 
$$
hence,
$$ 
\chi_{_{\omega}}^*\chi_{_{\omega}}z_0 
= \chi_{_{\omega}}^*\left[\Pi_\alpha^{\omega}\right]^{^\dagger}\left(\varrho(\cdot) 
- C\left(\Pi_\alpha z_0^2\right)(\cdot) - C\left(K_\alpha\ast Sz\right)(\cdot)\right), 
$$
which gives, by using \eqref{eq1}, 
$$
\mathcal{R}z_0^{\partial\Omega} =  \chi_{_{\omega}}^*\left[\Pi_\alpha^{\omega}
\right]^{^\dagger}\left(\varrho(\cdot) - C\left(\Pi_\alpha z_0^2\right)(\cdot) 
- C\left(K_\alpha\ast Sz\right)(\cdot)\right) - (z_0^1 - z_0^2).
$$
We deduce that 
$$
z_0^\Sigma = \chi_{_{\Sigma}}\gamma_0\mathcal{R}z_0^{\partial\Omega} 
=  \chi_{_{\Sigma}}\gamma_0\left(\chi_{_{\omega}}^*\left[\Pi_\alpha^{\omega}
\right]^{^\dagger}\left(\varrho(\cdot) - C\left(\Pi_\alpha z_0^2\right)(\cdot) 
- C\left(K_\alpha\ast Sz\right)(\cdot)\right) - (z_0^1 - z_0^2)\right).
$$
Finally, system \eqref{sys.semi.lin} is $\mathcal{B}$-observable on $\Sigma$.\\
This completes the proof.
\end{proof}

Let us add the following hypotheses before we give the remaining results of this work:

$\exists q,s,r >0$, satisfying $\dfrac{1}{s} + \dfrac{1}{q} = 1 + \dfrac{1}{r}$ such that
\begin{itemize}
\item[$H_1$ -] $\exists g\in L^{^r}_{_{\alpha-1}}[0,b]$, 
verifying $\|\mathcal{Q}(t^\alpha\theta)\|_{_{\mathcal{L}(X,X^\alpha)}} 
\leq |g(t)|$, $\forall \theta>0, \ \forall t\in [0,b] .$ 
\item[$H_2$ -] The operator $S : L^r(0,b;X^\alpha) \xrightarrow{\hspace*{0.5cm}} 
L^s(0,b;X)$ is well defined and satisfies
\begin{equation}
\label{cnd.N}
\left\{
\begin{array}{lll}
||Sz_1-Sz_2||_{_{L^{^s}\left(0,b;X\right)}} \leq k(||z_1||,||z_2||)||z_1-z_2||_{_{L^{^r}(0,b;X^\alpha)}}  ,\\
\text{with } k:]0,+\infty[\times]0,+\infty[ \longrightarrow ]0,+\infty[ \text{ such that } 
k(t_1,t_2) \xrightarrow[t_1,t_2\rightarrow 0]{} 0\\
S(0) = 0,
\end{array}
\right.
\end{equation}
where $X^\alpha$ is the fractional power of $X$, see \cite{pazy} for more information.	
\end{itemize} 

The assumption $H_1$ results in many useful properties regarding the norms 
of $\mathcal{P}_\alpha$ and $\mathcal{K}_\alpha$ in $\mathcal{L}(X,X^\alpha)$ 
which will be used to prove the upcoming results of this paper. 
So, based upon $H_1$, we can easily show that:

\begin{equation}
\label{cnd.p.k}
\|\mathcal{P}_\alpha(\cdot)\|_{_{L^{^r}(0,b;X^\alpha)}} 
\leq ||g(\cdot)||_{_{L^{^r}[0,b]}}   
\mbox{ and } ||K_\alpha(\cdot)||_{_{L^{^q}\left(0,b;\mathcal{L}(X,X^\alpha)\right)}} 
\leq ||g(\cdot)||_{_{L^{^q}_{_{\alpha-1}}[0,b]}}.
\end{equation}

Then, we have the following theorems,

\begin{theorem}
Let us define, for all $\tilde{z}_0$ in $V$, the mapping
$$
\Theta_{_{\tilde{z}_0}}(z) = \mathcal{P}_\alpha(\cdot)\chi_{_{\omega}}^*\tilde{z}_0 
+ \left(K_\alpha\ast Sz\right)(\cdot), \quad  \forall z \in L^2(0,b;X).
$$
We have the following assertions:
\begin{itemize}
\item[1-] $\exists a,m>0$ such that, $\forall \tilde{z}_0 \in B(0,m)\subset V$, 
the functional $\Theta_{_{\tilde{z}_0}}$ has a unique fixed point which 
is the unique mild solution of \eqref{sol.semi.lin} in $B(0,a) \subset L^r(0,b;X^\alpha)$.
\item[2-] The functional
$$
\begin{array}{lllll}
h& : & B(0,m) & \longrightarrow & B(0,a)\\
&   &   \hfill \tilde{z}_0 & \longmapsto & z(\cdot),
\end{array}
$$  
that associates to every regional initial state in $\omega$, $\tilde{z}_0$, 
the corresponding unique mild solution of \eqref{sys.semi.lin}, is globally Lipschitz.
\end{itemize}
\end{theorem}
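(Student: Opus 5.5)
The plan is to apply the Banach fixed point theorem to $\Theta_{_{\tilde{z}_0}}$ on the closed ball $B(0,a)\subset L^r(0,b;X^\alpha)$, with $a$ and $m$ chosen through the smallness of $k$ near the origin. The basic tool is the Young convolution inequality with exponents $\frac{1}{s}+\frac{1}{q}=1+\frac{1}{r}$. Combined with \eqref{cnd.p.k}, it gives for $w\in L^s(0,b;X)$
$$
\|K_\alpha\ast w\|_{_{L^{^r}(0,b;X^\alpha)}} \leq \|g\|_{_{L^{^q}_{_{\alpha-1}}[0,b]}}\,\|w\|_{_{L^{^s}(0,b;X)}} =: c_1\|w\|_{_{L^{^s}(0,b;X)}}.
$$
The factor $(t-s)^{\alpha-1}$ is absorbed into the weighted norm of $g$. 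For the first term of $\Theta_{_{\tilde{z}_0}}$, I will use \eqref{cnd.p.k} and the continuous embedding of $V$ into $H^1(\omega)$ to get
$$
\|\mathcal{P}_\alpha(\cdot)\chi_{_{\omega}}^*\tilde{z}_0\|_{_{L^{^r}(0,b;X^\alpha)}}\leq c_2\|\tilde{z}_0\|_V,
$$
with $c_2$ equal to $\|g\|_{L^r}$ times the embedding constant. Here I use that $\chi_{_{\omega}}^*$ is bounded, since it is the adjoint of the restriction.

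\textbf{Step 1 (stability of the ball).} For $z\in B(0,a)$, I use $S(0)=0$ and $H_2$ with $z_2=0$ to get $\|Sz\|_{L^s}\leq k(a,a)\,a$. I will read $k(\|z_1\|,\|z_2\|)$ as nondecreasing, or else replace $k$ by $\sup_{t_1,t_2\le a}k$, which still tends to $0$. This gives
$$
\|\Theta_{_{\tilde{z}_0}}(z)\|\leq c_2 m + c_1 k(a,a)\,a .
$$
I choose $a$ small so that $c_1k(a,a)\leq \frac12$, and then $m=\frac{a}{2c_2}$. With these choices $\Theta_{_{\tilde{z}_0}}$ maps $B(0,a)$ into itself.

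\textbf{Step 2 (contraction).} For $z_1,z_2\in B(0,a)$, the first term cancels and
$$
\|\Theta_{_{\tilde{z}_0}}(z_1)-\Theta_{_{\tilde{z}_0}}(z_2)\|\leq c_1k(a,a)\|z_1-z_2\|\leq \tfrac12\|z_1-z_2\|.
$$
Banach's theorem then gives a unique fixed point in $B(0,a)$. By the identity \eqref{sol.semi.lin.new} with initial datum $\chi_{_{\omega}}^*\tilde{z}_0$, this fixed point is exactly the mild solution. This proves assertion 1.

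\textbf{Step 3 (Lipschitz dependence).} Let $z=h(\tilde{z}_0)$ and $y=h(\tilde{y}_0)$. Subtracting the two fixed point identities gives
$$
\|z-y\|\leq c_2\|\tilde{z}_0-\tilde{y}_0\|_V+\tfrac12\|z-y\|,
$$
hence $\|h(\tilde{z}_0)-h(\tilde{y}_0)\|\leq 2c_2\|\tilde{z}_0-\tilde{y}_0\|_V$. So $h$ is Lipschitz on all of $B(0,m)$.

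The main obstacle I expect is justifying the two linear estimates with the weighted norm. I need the Young inequality to hold with the kernel $t^{\alpha-1}\|K_\alpha(t)\|$ measured in $L^q$ through $g\in L^r_{\alpha-1}$. The exponent bookkeeping in \eqref{cnd.p.k} ($q$ versus $r$) has to be reconciled, so I would first verify \eqref{cnd.p.k} from $H_1$ by integrating against $\varpi_\alpha$ and $\theta\varpi_\alpha$, which are probability-type densities, and then apply Young. A secondary subtlety is that the fixed point lives in $L^r(0,b;X^\alpha)$ while $\Theta$ was defined on $L^2(0,b;X)$; I would handle this by working throughout in $L^r(0,b;X^\alpha)$, where $H_2$ makes $S$ well defined.
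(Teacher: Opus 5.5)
Your proposal is correct and is essentially the paper's proof. Both apply Banach's fixed point theorem to $\Theta_{_{\tilde{z}_0}}$ on a small ball of $L^r(0,b;X^\alpha)$, use Young's inequality with \eqref{cnd.p.k} to get both the contraction and the invariance of the ball, and obtain the Lipschitz bound for $h$ by subtracting the two fixed-point identities and absorbing the nonlinear term. The differences are only bookkeeping: you choose $c_1\sup_{t_i\le a}k(t_1,t_2)\le\frac12$ and $m=a/(2c_2)$ instead of the paper's explicit $m$, keep the embedding constant of $V$ in $H^1(\omega)$, and drop the harmless factor $1/\Gamma(\alpha)\le 1$. The $q$ versus $r$ mismatch you flag is resolvable: $s\ge 1$ forces $q\le r$, so $L^r_{\alpha-1}[0,b]\subset L^q_{\alpha-1}[0,b]$ on the bounded interval.
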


\begin{proof}
\begin{itemize}
\item[1.] Since $\lim\limits_{t_1,t_2\rightarrow 0} k(t_1,t_2)= 0$, 
then, there exists $a>0$ such that:
\begin{equation}
\label{C1}
\underset{t_i\leq a}{Sup}\ k(t_1,t_2) 
< \dfrac{\Gamma(\alpha)}{\|g\|_{_{L^q_{\alpha-1}[0,b]}}}.
\end{equation}
The idea to prove the first result of this theorem is to use Banach's fixed point theorem. 
This will be done in two steps.\\
\textit{Step 1.} Let us show that $\Theta_{_{\tilde{z}_0}}$ 
is a strict contraction.\\
Let $z_1$ and $z_2$ be two elements of $B(0,a)$. We have:
$$
\begin{array}{lllll}
||\Theta_{_{\tilde{z}_0}}(z_1) - \Theta_{_{\tilde{z}_0}}(z_2)||_{_{L^{^r}(0,b;X^\alpha)}}
&= &||K_\alpha\ast(Sz_1-Sz_2)||_{_{L^{^r}(0,b;X^\alpha)}}.
\end{array}
$$
Using Young's inequality in \cite{ardent}, we get:
$$
\begin{array}{lllll}
||\Theta_{_{\tilde{z}_0}}(z_1) - \Theta_{_{\tilde{z}_0}}(z_2)||_{_{L^{^r}(0,b;X^\alpha)}}
&\leq &||K_\alpha(\cdot)||_{_{L^{^q}\left(0,b;\mathcal{L}(X,X^\alpha)\right)}}
\cdot ||Sz_1-Sz_2||_{_{L^{^s}(0,b;X)}},
\end{array}
$$
from \eqref{cnd.p.k} and \eqref{cnd.N}, we get:
$$
\begin{array}{lllll}
||\Theta_{_{\tilde{z}_0}}(z_1) - \Theta_{_{\tilde{z}_0}}(z_2)||_{_{L^{^r}(0,b;X^\alpha)}}
&\leq &\dfrac{||g(\cdot)||_{_{L^{^q}_{_{\alpha-1}}[0,b]}}}{\Gamma(\alpha)} 
k(||z_1||,||z_2||)||z_1-z_2||_{_{L^{^r}(0,b;X^\alpha)}}.
\end{array}
$$
Let us denote $\mathcal{C}_1 
:= \dfrac{||g(\cdot)||_{_{L^{^q}_{_{\alpha-1}}[0,b]}}}{\Gamma(\alpha)} k(||z_1||,||z_2||)$, thus:
$$
\begin{array}{lllll}
||\Theta_{_{\tilde{z}_0}}(z_1) - \Theta_{_{\tilde{z}_0}}(z_2)||_{_{L^{^r}(0,b;X^\alpha)}}
&\leq &\mathcal{C}_1||z_1-z_2||_{_{L^{^r}(0,b;X^\alpha)}}.
\end{array}
$$
and from \eqref{C1} we can deduce that $\mathcal{C}_1<1.$ This means that 
$\Theta_{_{\tilde{z}_0}}$ is a strict contraction.\\
\textit{Step 2.} Let us demonstrate that 
$\Theta_{_{\tilde{z}_0}}\left(B(0,a)\right)\subset B(0,a).$\\
Let $z$ be in $B(0,a)$, we have:
$$
\begin{array}{llll}
\|\Theta_{_{\tilde{z}_0}}(z)\|_{_{L^{^r}(0,b;X^\alpha)}} 
& = & \|\mathcal{P}_\alpha(\cdot)\chi_{_{\omega}}^*\tilde{z}_0 
+ (K_\alpha\ast Sz)\|_{_{L^{^r}(0,b;X^\alpha)}}, \\
& \leq & \|\mathcal{P}_\alpha(\cdot)\chi_{_{\omega}}^*\tilde{z}_0\|_{_{L^{^r}(0,b;X^\alpha)}} 
+ \|(K_\alpha\ast Sz)\|_{_{L^{^r}(0,b;X^\alpha)}},\\
& \leq & \|\tilde{z}_0\|_{_V}\|g\|_{_{L^{^r}[0,b]}} 
+ \|K_\alpha\|_{_{L^{^q}\left(0,b;\mathcal{L}(X,X^\alpha)\right)}}\|Sz\|_{_{L^{^s}\left(0,b;X\right)}},\\
& \leq & \|\tilde{z}_0\|_{_V}\|g\|_{_{L^{^r}[0,b]}} 
+ \dfrac{a.\|g\|_{_{L^q_{\alpha-1}[0,b]}}}{\Gamma(\alpha)}\underset{t_1\leq a}{Sup}\ k(t_1,0).
\end{array}
$$
Thus, if $$ \quad m = \dfrac{a}{||g||_{_{L^{^r}[0,b]}}}\left(1 
- \dfrac{||g||_{_{L^{^q}_{_{\alpha-1}}[0,b]}}}{\Gamma(\alpha)} 
\underset{t_1\leq a}{Sup}\ k(t_1,0) \right),
$$ 
it is clear that, 
$$
\tilde{z}_0 \in B(0,m) \implies \Theta_{_{\tilde{z}_0}}(z) \in B(0,a). 
$$
Hence, from Banach's fixed point theorem $\Theta_{_{\tilde{z}_0}}$ has a unique fixed point.
\item[2.] Let us consider $\tilde{z}_{1_{_0}}$ and $\tilde{z}_{2_{_0}}$ 
in $B(0,m)$ such that $h(\tilde{z}_{1_{_0}}) = z_1$ and $h(\tilde{z}_{2_{_0}}) = z_2$.\\
$$
\begin{array}{llll}
\|h(\tilde{z}_{1_{_0}}) - h(\tilde{z}_{2_{_0}})\|_{_{L^{^r}(0,b;X^\alpha)}} 
& = & \|\mathcal{P}_\alpha(\cdot)\chi_{_{\omega}}^*\left(\tilde{z}_{1_{_0}}-\tilde{z}_{2_{_0}}\right) 
+ K_\alpha\left(Sz_1-Sz_2\right)\|_{_{L^{^r}(0,b;X^\alpha)}},\\
& \leq & \|g\|_{_{L^{^r}[0,b]}}\| \tilde{z}_{1_{_0}} - \tilde{z}_{2_{_0}} \|_{_V} 
+ \mathcal{C}_1\|z_1-z_2\|_{_{L^{^r}(0,b;X^\alpha)}}.
\end{array}
$$
This yields that:
\begin{equation}\label{th.c1}
\|h(\tilde{z}_{1_{_0}}) - h(\tilde{z}_{2_{_0}})\|_{_{L^{^r}(0,b;X^\alpha)}} 
\leq \dfrac{\|g\|_{_{L^{^r}[0,b]}}}{1 - \mathcal{C}_1}\| \tilde{z}_{1_{_0}} 
- \tilde{z}_{2_{_0}} \|_{_V}.
\end{equation}
Finally, $h$ is globally Lipschitz.
\end{itemize}
This completes the proof.
\end{proof}

\begin{theorem}
Let us consider, for every $\varrho$ in $L^2(0,b;\mathcal{O})$, 
the mapping $$\vartheta_\varrho(\tilde{z}_0) 
= \left[\Pi_\alpha^{\omega}\right]^{^\dagger}\left[\varrho(\cdot) 
- C\left(K_\alpha\ast Sh(\tilde{z}_0)\right)(\cdot)\right], 
\quad \forall \tilde{z}_0\in B(0,m).$$
If, moreover, the following assertions,
\begin{itemize}
\item[$H_3$ -] $\forall\tilde{z}_0\in B(0,m)$, 
$C\left(K_\alpha\ast Sh(\tilde{z}_0)\right)(\cdot)\in\mathcal{I}m(\Pi_\alpha^{\omega}).$
\item[$H_4$ -] $\exists \delta>0,$ $\|C\left(K_\alpha\ast Sz)\right)
(\cdot)\|_{_{L^2(0,b;\mathcal{O})}} \leq \delta\|Sz\|_{_{L^s(0,b;X)}}, \quad \forall z\in B(0,a).$
\end{itemize}
are satisfied, then we have :
\begin{itemize}
\item[1-] $\exists \rho>0, \ \forall \varrho\in B(0,\rho)\subset L^2(0,b;\mathcal{O})$, 
$\vartheta_\varrho(\cdot)$ admits a unique fixed point in $B(0,m)$.
\item[2- ] The function,
$$\begin{array}{lllll}
l & : & B(0,\rho) & \longrightarrow & B(0,m)\\
&   &   \hfill  \varrho& \longmapsto & \tilde{z}_0,
\end{array}$$  
that corresponds for every measurement $\varrho$ in $B(0,\rho)$ the associated unique 
fixed point of $\vartheta_\varrho(\cdot)$, which is the initial state in $\omega$, is globally Lipschitz.
\end{itemize}
\end{theorem}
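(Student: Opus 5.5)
We will argue as in the proof of the previous theorem, applying Banach's fixed point theorem to $\vartheta_\varrho$ on the closed ball $B(0,m)\subset V$. The key tool is the identity $\|[\Pi_\alpha^{\omega}]^{\dagger} w\|_V = \|w\|_{L^2(0,b;\mathcal{O})}$ for $w\in\mathcal{I}m(\Pi_\alpha^{\omega})$. This holds because the $V$-norm is defined by $\|U\|_V=\|\Pi_\alpha^{\omega}U\|$ and $\Pi_\alpha^{\omega}[\Pi_\alpha^{\omega}]^{\dagger}$ acts as the identity on the range. Hypothesis $H_3$ ensures that $C(K_\alpha\ast Sh(\tilde z_0))$ lies in this range. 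For $\varrho$ itself, we either assume $\varrho\in\mathcal{I}m(\Pi_\alpha^{\omega})$ or use that the pseudo-inverse is an orthogonal projection followed by inversion, so $\|[\Pi_\alpha^{\omega}]^{\dagger}\varrho\|_V\le\|\varrho\|$. In both cases $\vartheta_\varrho$ takes values in $V$, and $\|[\Pi_\alpha^{\omega}]^{\dagger} w\|_V\le \|w\|$ for all relevant $w$.

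\textit{Contraction.} Let $\tilde z_{1_0},\tilde z_{2_0}\in B(0,m)$ and set $z_i=h(\tilde z_{i_0})\in B(0,a)$. Then
$$\|\vartheta_\varrho(\tilde z_{1_0})-\vartheta_\varrho(\tilde z_{2_0})\|_V\le \|C(K_\alpha\ast(Sz_1-Sz_2))\|_{L^2(0,b;\mathcal{O})}.$$
Because $H_4$ is stated for $Sz$ rather than for differences, we use linearity of $w\mapsto C(K_\alpha\ast w)$ and apply the bound to $Sz_1-Sz_2$. If that is not directly allowed, we read $H_4$ as a bound on the linear operator $w\mapsto C(K_\alpha\ast w)$ from $L^s(0,b;X)$. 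This gives the bound $\delta\|Sz_1-Sz_2\|_{L^s(0,b;X)}\le \delta\, k(\|z_1\|,\|z_2\|)\|z_1-z_2\|_{L^r(0,b;X^\alpha)}$ by $H_2$. Inequality \eqref{th.c1} then gives
$$\|\vartheta_\varrho(\tilde z_{1_0})-\vartheta_\varrho(\tilde z_{2_0})\|_V\le \mathcal{C}_2\|\tilde z_{1_0}-\tilde z_{2_0}\|_V,\qquad \mathcal{C}_2:=\frac{\delta\,\|g\|_{L^r[0,b]}}{1-\mathcal{C}_1}\sup_{t_i\le a}k(t_1,t_2).$$
This is the main obstacle: $\mathcal{C}_2<1$ is not automatic. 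We would first shrink $a$, which is allowed since $k\to0$ at the origin, so that $\sup_{t_i\le a}k(t_1,t_2)$ is small enough to give both \eqref{C1} and $\mathcal{C}_2<1$. The radius $m$ from the previous theorem is then recomputed with this smaller $a$; it stays positive.

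\textit{Invariance.} For $\tilde z_0\in B(0,m)$ we have, using $S(0)=0$ and $h(\tilde z_0)\in B(0,a)$,
$$\|\vartheta_\varrho(\tilde z_0)\|_V\le\|\varrho\|+\delta\|Sh(\tilde z_0)\|_{L^s}\le \rho+\delta\,a\sup_{t\le a}k(t,0).$$
Shrinking $a$ further if necessary, and keeping $m$ positive because $m$ is $a$ times a factor tending to $1/\|g\|_{L^r[0,b]}$ as $a\to0$, we arrange $\delta a\sup_{t\le a} k(t,0)<m$. Then $\rho:=m-\delta a\sup_{t\le a}k(t,0)>0$ works. Banach's theorem on the complete set $B(0,m)$ gives the unique fixed point, which proves assertion 1.

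\textit{Lipschitz property of $l$.} Let $\tilde z_{i_0}=l(\varrho_i)$. Then
$$\|\tilde z_{1_0}-\tilde z_{2_0}\|_V\le \|\varrho_1-\varrho_2\|+\mathcal{C}_2\|\tilde z_{1_0}-\tilde z_{2_0}\|_V,$$
which yields $\|l(\varrho_1)-l(\varrho_2)\|_V\le (1-\mathcal{C}_2)^{-1}\|\varrho_1-\varrho_2\|_{L^2(0,b;\mathcal{O})}$ and proves assertion 2.
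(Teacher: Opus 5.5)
Your proposal is correct and follows the paper's own proof: you apply Banach's fixed point theorem on $B(0,m)$, get the contraction estimate from $H_3$, $H_4$, $H_2$ and the Lipschitz bound on $h$, get invariance from $S(0)=0$ with $\rho = m - a\delta\sup_{t\le a}k(t,0)$, and obtain the Lipschitz property of $l$ by the same triangle-inequality argument. Your explicit shrinking of $a$ (with $m$ recomputed) to secure both $\mathcal{C}_2<1$ and $\rho>0$, and your projection bound covering $\varrho\notin\mathcal{I}m(\Pi_\alpha^{\omega})$, make rigorous two points the paper only asserts: its choice of constants via the undefined bound ``$t_i\le l$'', and its norm equality that implicitly assumes $\varrho\in\mathcal{I}m(\Pi_\alpha^{\omega})$.
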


\begin{proof}
\begin{itemize}
\item[1.] From $\lim\limits_{t_1,t_2\rightarrow 0} k(t_1,t_2)= 0$, 
we can deduce that, $\exists \rho$ such that:
\begin{equation}
\label{C2}
\underset{t_i\leq l}{Sup}\ k(t_1,t_2) 
< \dfrac{1-\mathcal{C}_1}{\delta\|g\|_{_{L^r[0,b]}}}.
\end{equation}
Similarly to the first assertion in the previous theorem, we shall 
use Banach's fixed point theorem.\\
\textit{Step 1.} Let $\tilde{z}_0$ and $\bar{z}_0$ be in $B(0,m)$, then:
$$
\begin{array}{lll}
\| \vartheta_\varrho(\tilde{z}_0) - \vartheta_\varrho(\bar{z}_0) \|_V 
& = & \|\left[\Pi_\alpha^{\omega}\right]^{^\dagger}\left[C\left(K_\alpha\ast 
\left(Sh(\tilde{z}_0) - Sh(\bar{z}_0)\right)\right)\right]\|_V\\\\
& = & \|\Pi_\alpha^{\omega}\left[\Pi_\alpha^{\omega}\right]^{^\dagger}\left[
C\left(K_\alpha\ast \left(Sh(\tilde{z}_0) 
- Sh(\bar{z}_0)\right)\right)\right]\|_{_{L^{^2}(0,b;\mathcal{O})}}.
\end{array}
$$
Using $H_3$, we can obtain that,
$$
\begin{array}{lll}
\| \vartheta_\varrho(\tilde{z}_0) - \vartheta_\varrho(\bar{z}_0) \|_V 
& \leq  &  \|C\left(K_\alpha\ast \left(Sh(\tilde{z}_0) 
- Sh(\bar{z}_0)\right)\right)\|_{_{L^{^2}(0,b;\mathcal{O})}},
\end{array}
$$
and from $H_4$, we get,
$$
\begin{array}{lll}
\| \vartheta_\varrho(\tilde{z}_0) - \vartheta_\varrho(\bar{z}_0) \|_V 
& \leq  &  \delta\|Sh(\tilde{z}_0) - Sh(\bar{z}_0)\|_{_{L^{^2}(0,b;\mathcal{O})}},
\end{array}
$$
with the help of $H_2$ and $\eqref{th.c1}$, we can see that,
$$
\begin{array}{lll}
\| \vartheta_\varrho(\tilde{z}_0) - \vartheta_\varrho(\bar{z}_0) \|_V 
& \leq  &\underset{t_i\leq l}{Sup}\ k(t_1,t_2)\dfrac{\delta\|g\|_{_{L^r[0,b]}}}{1
-\mathcal{C}_1}\|\tilde{z}_0 - \bar{z}_0\|_V.
\end{array}
$$
So, if we set $\mathcal{C}_2 := \underset{t_i\leq l}{Sup}\ k(t_1,t_2)
\dfrac{\delta\|g\|_{_{L^r[0,b]}}}{1-\mathcal{C}_1}$, we can write,
$$
\begin{array}{lll}
\| \vartheta_\varrho(\tilde{z}_0) - \vartheta_\varrho(\bar{z}_0) \|_V 
& \leq  &\mathcal{C}_2\|\tilde{z}_0 - \bar{z}_0\|_V,
\end{array}
$$
and from \eqref{C2}, we can see that $\mathcal{C}_2<1$. 
Thus, $\vartheta_\varrho$ is a strict contraction on $B(0,m)$.\\
\textit{Step 2.} Let us show that 
$\vartheta_\varrho\left(B(0,m)\right)\subset B(0,m)$.\\
Let $\tilde{z}_0$ be in $B(0,m)$, we have:
$$
\begin{array}{llll}
\|\vartheta_\varrho(\tilde{z}_0)\|_V
& = & \|\varrho(\cdot) - C\left(K_\alpha\ast Sh(\tilde{z}_0)
\right)\|_{_{L^{^2}(0,b;\mathcal{O})}},\\
& \leq & \|\varrho\|_{_{L^{^2}(0,b;\mathcal{O})}} 
+ \delta.k(\|h(\tilde{z}_0)\|,0)\|h(\tilde{z}_0)\|_{_{L^{^r}(0,b;X^\alpha)}},\\
& \leq & \|\varrho\|_{_{L^{^2}(0,b;\mathcal{O})}} 
+ a\delta\underset{t\leq l}{Sup}\ k(t,0).
\end{array}
$$
So, if we choose $\rho := m - a\delta\underset{t\leq l}{Sup}\ k(t,0)$, 
then it is obvious that $\varrho\in B(0,\rho)$ implies that 
$\vartheta_\varrho(\tilde{z}_0)\in B(0,\rho)$. Hence, $\vartheta_\varrho(B(0,m))\subset B(0,m)$.\\
Thus, from Banach's fixed point theorem $\vartheta_\varrho$ has a unique fixed point.
\item[2.] Let $\varrho_1$ and $\varrho_2$ be in $B(0,\rho)$ such that $l(\varrho_1) 
= \tilde{z}_{{1_{_0}}} = \vartheta_{\varrho_1}(\tilde{z}_{{1_{_0}}})$ and $l(\varrho_2) 
= \tilde{z}_{{2_{_0}}} = \vartheta_{\varrho_2}(\tilde{z}_{{2_{_0}}})$.\\
$$
\begin{array}{llll}
\|l(\varrho_1) - l(\varrho_2)\|_V
& = & \|\vartheta_{\varrho_1}(\tilde{z}_{{1_{_0}}})-\vartheta_{\varrho_2}(\tilde{z}_{{2_{_0}}})  \|_V,\\
& \leq & \|\vartheta_{\varrho_1}(\tilde{z}_{{1_{_0}}})-\vartheta_{\varrho_1}(\tilde{z}_{{2_{_0}}})  \|_V 
+ \|\vartheta_{\varrho_1}(\tilde{z}_{{2_{_0}}})-\vartheta_{\varrho_2}(\tilde{z}_{{2_{_0}}})  \|_V,\\
& \leq & \mathcal{C}_2\|l(\varrho_1) - l(\varrho_2)\|_V + \|\varrho_1-\varrho_2\|_{_{L^{^2}(0,b;\mathcal{O})}}.
\end{array}
$$
This means that,
$$
\|l(\varrho_1) - l(\varrho_2)\|_V \leq \dfrac{1}{1 - \mathcal{C}_2}\|
\varrho_1-\varrho_2\|_{_{L^{^2}(0,b;\mathcal{O})}}.
$$
Hence, $l$ is globally Lipschitz.
\end{itemize}
This completes the proof.
\end{proof}

The first theorem indicates that for every regional initial state, in a certain 
ball of $V$, system \eqref{sys.semi.lin} has a unique mild solution, in a certain 
ball in $L^r(0,b;X^\alpha)$, which is a fixed point of $\Theta_{_{\tilde{z}_0}}$. 
As for the second theorem, it shows that for every measurement, taken on the system 
\eqref{sol.semi.lin}, in a ball of $L^2(0,b;\mathcal{O})$, we obtain a unique regional 
initial state in ball of $V$, which is a fixed point of $\vartheta_\varrho(\cdot)$ 
that corresponds to the initial state in $\omega$.      

Using the definition of $\vartheta_\varrho$, we give the following theorem, 
in which we give a sequence that converges to the initial state in the internal 
sub-region $\omega$.

\begin{theorem}
Let $n$ be in $\mathbb{N}$, the following sequence,
$$\left\{\begin{array}{llll}
\tilde{z}_0^0 = 0\\
\tilde{z}_0^{n+1} = \left[\Pi_\alpha^{\omega}\right]^{^\dagger}\left[\varrho(\cdot)
- C\left(K_\alpha\ast Sh(\tilde{z}_0^n)\right)(\cdot)\right], \ \forall n\geq 0.
\end{array}\right.$$
converges to the initial state in $\omega$.
\end{theorem}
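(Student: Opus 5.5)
The sequence in the statement is exactly the Picard iteration $\tilde{z}_0^{n+1} = \vartheta_\varrho(\tilde{z}_0^n)$ for the map $\vartheta_\varrho$ introduced in the previous theorem. The plan is therefore to invoke the Banach fixed point theorem in its constructive form, with the contraction and invariance estimates already established there. Throughout, the measurement $\varrho$ is assumed to lie in $B(0,\rho)\subset L^2(0,b;\mathcal{O})$, and hypotheses $H_1$--$H_4$ hold, so that both previous theorems apply.

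The first step checks that the sequence is well defined and stays in $B(0,m)\subset V$. The starting point $\tilde{z}_0^0=0$ belongs to $B(0,m)$. Step 2 of the proof of the previous theorem gives $\vartheta_\varrho(B(0,m))\subset B(0,m)$, so by induction $\tilde{z}_0^n\in B(0,m)$ for every $n$. In particular each $h(\tilde{z}_0^n)$ is the unique mild solution in $B(0,a)$ given by the theorem on $\Theta_{\tilde{z}_0}$, and by $H_3$ the pseudo-inverse is applied to an element of $\mathcal{I}m(\Pi_\alpha^\omega)$.

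The second step uses the contraction estimate $\|\vartheta_\varrho(\tilde{z}_0)-\vartheta_\varrho(\bar{z}_0)\|_V\le \mathcal{C}_2\|\tilde{z}_0-\bar{z}_0\|_V$ with $\mathcal{C}_2<1$. Applied to consecutive iterates it yields $\|\tilde{z}_0^{n+1}-\tilde{z}_0^n\|_V\le \mathcal{C}_2^n\|\tilde{z}_0^1\|_V$. Summing the geometric series shows that $(\tilde{z}_0^n)$ is Cauchy in $V$, with $\|\tilde{z}_0^{n+p}-\tilde{z}_0^n\|_V\le \frac{\mathcal{C}_2^n}{1-\mathcal{C}_2}\|\tilde{z}_0^1\|_V$. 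Since $V$ is a Banach space, the sequence converges in $V$ to some $z^\star$. Because $V$ embeds continuously in $H^1(\omega)$, the convergence also holds in $H^1(\omega)$.

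The third step identifies the limit. The map $\vartheta_\varrho$ is Lipschitz, hence continuous, so passing to the limit in the recursion gives $z^\star=\vartheta_\varrho(z^\star)$. By uniqueness of the fixed point in the previous theorem, $z^\star=l(\varrho)$. It remains to show that $l(\varrho)$ equals the true regional initial state $\chi_\omega z_0$. The proof of the $\mathcal{B}$-observability theorem shows that $\chi_\omega z_0$ satisfies $\chi_\omega z_0=[\Pi_\alpha^\omega]^\dagger(\varrho - C(K_\alpha\ast Sz))$ whenever $z_0$ is supported in $\omega$ (so that $z_0^2=0$) and $z=h(\chi_\omega z_0)$. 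Hence $\chi_\omega z_0$ is a fixed point of $\vartheta_\varrho$ in $B(0,m)$, and uniqueness forces $z^\star=\chi_\omega z_0$.

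I expect this identification to be the main obstacle. One has to argue that the true regional state lies in $B(0,m)\cap V$ and that the term $C\Pi_\alpha z_0^2$ vanishes, or is absorbed into the measurement. I would handle this by working under the paper's implicit standing assumption that the initial state to be recovered belongs to $V$ and is small. Its trace on $\Sigma$ then follows by applying $\tilde{\chi}_\Sigma\tilde{\gamma}_0$, which is continuous on $H^1(\omega)$, and this gives convergence on $\Sigma$ as well.
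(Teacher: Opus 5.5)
Your proof is correct. Its first half (the bound $\|\tilde{z}_0^{n+1}-\tilde{z}_0^{n}\|_V\le\mathcal{C}_2^n\|\tilde{z}_0^1\|_V$, the geometric Cauchy estimate, and completeness of $V$) is the same as the paper's. The difference is in how you identify the limit.

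You use the standard Picard argument: $\vartheta_\varrho$ is continuous, so the limit is a fixed point, and uniqueness makes it $l(\varrho)$. The paper instead uses the Lipschitz continuity of the measurement-to-state map $l$. It writes $\tilde{z}_0^n=l(\varrho_n)$ with the synthetic measurement $\varrho_n=Ch(\tilde{z}_0^n)$ and obtains $\|\tilde{z}_0^n-l(\varrho)\|_V\le\frac{1}{1-\mathcal{C}_2}\|\varrho-Ch(\tilde{z}_0^n)\|_{L^2(0,b;\mathcal{O})}$. It then identifies this output residual with $\|\Pi_\alpha^{\omega}(\tilde{z}_0^{n+1}-\tilde{z}_0^n)\|_{L^2(0,b;\mathcal{O})}=\|\tilde{z}_0^{n+1}-\tilde{z}_0^n\|_V$.

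The merit of the paper's route is an a posteriori error bound in terms of $\|\varrho-Ch(\tilde{z}_0^n)\|$, which is exactly the quantity tested in the algorithm's stopping rule. Its cost is two facts the paper does not check:
\begin{itemize}
\item that $\varrho_n\in B(0,\rho)$, so that $l(\varrho_n)$ is even defined;
\item that $\Pi_\alpha^{\omega}[\Pi_\alpha^{\omega}]^{\dagger}$ acts as the identity on $\varrho-C(K_\alpha\ast Sh(\tilde{z}_0^n))$, which requires $\varrho$ itself to lie in $\mathcal{I}m(\Pi_\alpha^{\omega})$ and is not provided by $H_3$.
\end{itemize}
Your argument needs neither. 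Letting $p\to\infty$ in your Cauchy estimate already gives the same rate $\frac{\mathcal{C}_2^n}{1-\mathcal{C}_2}\|\tilde{z}_0^1\|_V$ with which the paper concludes.

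One detail you should add: the limit must stay in the domain of $\vartheta_\varrho$. Either take $B(0,m)$ closed, or note that the invariance estimate of the previous proof puts every iterate with $n\ge1$ in the closed ball of radius $\|\varrho\|_{L^2(0,b;\mathcal{O})}+m-\rho<m$.

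Your last step, showing $l(\varrho)=\chi_{_{\omega}}z_0$, goes beyond the paper, which simply declares the fixed point to be ``the initial state in $\omega$''. Your caveat is legitimate and points to something the paper glosses over: this identification needs the contribution of $z_0^2$ to the output to vanish (or be absorbed into the data), and it needs $\chi_{_{\omega}}z_0\in B(0,m)$.
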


\begin{proof}
Let us first show that $\left(\tilde{z}_0^{n}\right)_n$ is convergent. For that, 
we will show that it is a Cauchy sequence in a Banach space, hence convergent.\\
Let $n$ be in $\mathbb{N}$, we have:
$$\begin{array}{lcl}
\|\tilde{z}_0^{n+1} - \tilde{z}_0^{n}\|_V 
& = & \|\vartheta_\varrho(\tilde{z}_0^{n}) - \vartheta_\varrho(\tilde{z}_0^{n-1})\|_V,\\
& \leq & \mathcal{C}_2\|\tilde{z}_0^{n} - \tilde{z}_0^{n-1}\|_V,\\
&\vdots & \qquad \vdots\\
& \leq & . \mathcal{C}_2^n\|\tilde{z}_0^{1}\|_V.
\end{array}$$
Now let $m$ such that $m>n$. We have:
$$
\begin{array}{lll}
\|\tilde{z}_0^{m} - \tilde{z}_0^{n}\|_V& \leq &\|\tilde{z}_0^{m} 
- \tilde{z}_0^{m-1}\|_V +\|\tilde{z}_0^{m-1} - \tilde{z}_0^{m-2}\|_V 
+ \hdots + \|\tilde{z}_0^{n+1} - \tilde{z}_0^{n}\|_V,\\
& \leq & \mathcal{C}_2^{m-1}\|\tilde{z}_0^{1}\|_V + \mathcal{C}_2^{m-2}\|
\tilde{z}_0^{1}\|_V+\hdots + \mathcal{C}_2^{n}\|\tilde{z}_0^{1}\|_V,\\
& \leq & \dfrac{\mathcal{C}_2^{n}}{1-\mathcal{C}_2}\|\tilde{z}_0^{1}\|_V.
\end{array}
$$
Thus, 
$$
\lim\limits_{n,m\rightarrow +\infty}\|\tilde{z}_0^{m} - \tilde{z}_0^{n}\|_V =0.
$$
Hence,  $\left(\tilde{z}_0^{n}\right)_n$ is a Cauchy sequence in $V$, thus convergent.\\
Furthermore, we have:
$$
\|\tilde{z}_0^{n} - \tilde{z}_0\|_V = \|l(\varrho_n) - l(\varrho)\|_V,
$$
such that $\varrho_n = Ch(\tilde{z}_0^n).$\\
Thus, 
$$ 
\begin{array}{lll}
\|\tilde{z}_0^{n} - \tilde{z}_0\|_V 
& \leq & \dfrac{1}{1 - \mathcal{C}_2}\|\varrho_n-\varrho\|_{_{L^{^2}(0,b;\mathcal{O})}},\\
& \leq &  \dfrac{1}{1 - \mathcal{C}_2}\|\varrho - \Pi_\alpha^{\omega}\tilde{z}_0^n 
- C\left(K_\alpha\ast Sh(\tilde{z}_0^n)\right) \|_{_{L^{^2}(0,b;\mathcal{O})}},\\
& \leq &  \dfrac{1}{1 - \mathcal{C}_2}\|\Pi_\alpha^{\omega}\tilde{z}_0^{n+1}- 
\Pi_\alpha^{\omega}\tilde{z}_0^n\|_{_{L^{^2}(0,b;\mathcal{O})}},\\
& = & \dfrac{1}{1 - \mathcal{C}_2}\|\tilde{z}_0^{n+1}- \tilde{z}_0^n\|_V,\\
& \leq & \dfrac{\mathcal{C}_2^n}{1 - \mathcal{C}_2}\|\tilde{z}_0^1\|_V 
\xrightarrow[n\rightarrow +\infty]{} 0. 
\end{array} 
$$
Hence,
$$ 
\tilde{z}_0^n \xrightarrow[n\rightarrow +\infty]{}\tilde{z}_0. 
$$
This completes the proof.
\end{proof}

If we denote by $\tilde{z}_0^*$ the limit of $\left\{\tilde{z}_0^n\right\}_{n\geq 0}$, 
then $z_0^{\Sigma} = \tilde{\chi}_{_\Sigma}\tilde{\gamma}_0\tilde{z}_0^*$, and we have 
the following algorithm that allows us to reconstruct the initial state in $\omega$.


\subsection*{Algorithm}

\begin{algorithm}[H]	
\SetAlgoLined
\vspace*{0.5cm}
\begin{description}
\item[\textbf{Step 1. }] Initialization of Data: 
threshold accuracy $\varepsilon$, the differentiation order $\alpha$, 
the desired boundary sub-region $\Sigma$, the linking internal 
sub-region $\omega$, sensors properties.

\item[\textbf{Step 2. }] Compute  $d(\cdot)=\varrho(\cdot) - C\left(K_\alpha\ast Sh(0)\right).$

\item[\textbf{Step 3. }] Repeat:

\begin{description}
\item[\textbullet] $\tilde{z}_0 = \left[\Pi_\alpha^{\omega}\right]^{^\dagger} d(\cdot).$

\item[\textbullet] $ \mbox{ Solve } ^{C}\mathcal{D}^{^\alpha}_{_{0^+}}z
=\mathcal{A}z+Sz \ ; \ \ y(0) = \chi_{\omega}^*\tilde{z}_0.$

\item[\textbullet] $\tilde{\varrho}(\cdot) = Cz.$

\item[\textbullet] $ d(\cdot) =  \tilde{\varrho}(\cdot) 
- C\left(K_\alpha\ast h(\tilde{z}_0)\right).$
\end{description}
\textbf{Until:} $\quad \|\varrho(\cdot) 
- \tilde{\varrho}(\cdot)\|_{_{L^2(0,b;\mathcal{O})}} \leq \varepsilon.$

\item[\textbf{Step 4.}]  The initial state in $\omega$ is $\tilde{z}_0$ 
and on $\Sigma$ is $\tilde{\chi}_{_\Sigma}\tilde{\gamma}_0\tilde{z}_0.$

\end{description}
\caption{\label{alg1} Solution to the boundary reconstruction problem.}
\end{algorithm}


\section{Numerical Simulation}

This section is devoted to numerical simulations; we shall present two examples 
to show the efficiency of the proposed approach. The first example is given with 
an academic model and with a zonal sensor for the measurements. As for the second 
example, we consider the Fisher-KPP model (KPP stands for Kolmogorov–Petrovsky–Piskunov) 
and we work with a pointwise sensor to collect the measurements. 


\subsubsection*{Example 1}

Let us  choose $\Omega = ]0,1[\times]0,1[$ and $T=2$. We consider the system's 
dynamic to be $ \mathcal{A} = \Delta = \displaystyle\sum_{i=1}^{2}
\dfrac{\partial^2}{\partial x_i^2} $, this operator has a set of 
eigenfunctions $\left\{ \phi_{ij}(x_1,x_2) = \dfrac{2}{\sqrt{\pi(1-\beta_{ij})}}
\cos\left(i x_1\right)\cos\left(j x_2\right), \right\}_{i,j\geq 0}$, which is an 
orthonormal basis of $H^1(\Omega)$, associated with the set of eigenvalues 
$\left\{\beta_{ij} = -\left(i^2 + j^2\right)\pi^2\right\}_{i,j\geq 0}.$\\

Let us consider the following two dimensional time-fractional semilinear system,
\begin{equation}
\label{example}
\left\{\begin{array}{lll}
^{^C}\mathcal{D}_{_{0^+}}^{^\alpha} z(x_1,x_2,t) 
= \Delta z(x_1,x_2,t)  + \displaystyle\sum_{i,j\geq 0 }^{\infty}
\langle z(t),\phi_{ij}\rangle_{_X}^2\phi_{ij}(x_1,x_2)  & in
\ \Omega\times]0,2] , \\
\dfrac{\partial z}{\partial\nu}(\xi_1,\xi_2,t) = 0 & on \ \partial\Omega\times]0,2], \\ 
z(x_1,x_2,0) = z_0(x_1,x_2) & in \ \Omega.
\end{array}  \right.
\end{equation}
The output functional is given by means of a zonal sensor $(G,d)$, 
where $G\subset \Omega$ is the spatial support of the sensor and 
$d\in L^2(G)$ is its spatial distribution. In such case, 
the output equation becomes,
$$	
\varrho(t) = \displaystyle\iint_{G}z(x_1,x_2,t)d(x_1,x_2)dx_1dx_2,
$$
and the observation space is $\mathcal{O} = \mathbb{R}$. For this first example, 
we take $G = \left[0.2\ ,\ 0.3\right]\times\left[0.7\ ,\ 0.9\right]$, $d \equiv 1 $, 
the differentiation order $\alpha = 0.75$, the initial state to be observed

$$ 
z_0(x_1,x_2) = \cos((x_1-1)\pi)\cos((x_2-1)\pi) ,
$$

and the desired boundary sub-region $\Sigma = ]0,1[\times\left\{1\right\}$.\\
After setting all the parameters of the system, we apply the proposed algorithm 
and we get the Figures~\ref{fig:ex1} and \ref{fig:ex1_init_both}. In 
sub-figure \ref{fig:ex1_init}, we find the initial state that we are trying 
to reconstruct; in sub-figure \ref{fig:ex1_init_rec}, we can see the plot of 
the initial state given to us by the proposed algorithm, we also call it the 
recovered or reconstructed initial state. At first sight, it is very clear 
that the two plots in sub-figures \ref{fig:ex1_init} and \ref{fig:ex1_init_rec} 
are different from one another; this is not the case on the boundary sub-region $\Sigma$; 
indeed, if we take a vertical cut at $x_2 = 1$ in both sub-figure we can immediately 
realize that the two initial states are almost identical, this is illustrated in 
Figure~\ref{fig:ex1_init_both} where we can see the plots of the two initial states 
on the desired boundary sub-region. Moreover, the resulting $L^2$-error, which we also 
call the reconstruction error, in this first example is:
$$
\| z_0 - \tilde{z}_0 \|_{_{L^2(\Sigma)}} = 4.2426\mbox{E-} 04.
$$

Taking into account Figure~\ref{fig:ex1_init_both} and the satisfying value 
of the reconstruction error, we can confidently say that the proposed approach 
in this paper is very effective when applied to the regional boundary 
reconstruction problem of the initial state.
The reconstruction error is also controlled by where we put the sensors in our 
evolution domain; indeed, in Table~\ref{tab:1} we can observe that the error 
changes whenever we change the location of the sensor, some positions result 
in an enormous value of the reconstruction error, with the same values in the 
other parameters; these positions, in this case $G = \left[0.7\ ,\ 0.9\right]
\times\left[0.5\ ,\ 0.7\right] $ and $G = \left[0.7\ ,\ 0.9\right]
\times\left[0.3\ ,\ 0.5\right]$, are called non-strategic, meaning that if we 
put the sensors at those locations then the considered initial state is not 
approximately observable, and thus cannot be reconstructed from the measurements 
provided by the sensors. The investigation of the relationship between the sensors' 
location and the reconstruction error for a semilinear system is still currently 
based solely on numerical evidence; to the best of our knowledge there are no 
solid theoretical results regarding this issue. Thus, further work needs to 
be done on the subject. 
\begin{table}[H]
\begin{center}
\begin{tabular}{ | c || c |}
\hline
Spatial support $G$ & Error $\|z_0 - \tilde{z}_0\|_{_{L^2(\Sigma)}}^2$ \\ \hline\hline
$\left[0.2\ ,\ 0.3\right]\times\left[0.7\ ,\ 0.9\right]$& $ 4.2426\mbox{E-} 04$ \\ \hline

$\left[0.3\ ,\ 0.5\right]\times\left[0.7\ ,\ 0.9\right]$&  $5.2666\mbox{E-} 02$ \\ \hline

$\left[0.5\ ,\ 0.7\right]\times\left[0.7\ ,\ 0.9\right]$& $1.3632\mbox{E-} 02$\\ \hline

$\left[0.7\ ,\ 0.9\right]\times\left[0.7\ ,\ 0.9\right]$	&$3.2514\mbox{E-} 03$\\ \hline

$\left[0.7\ ,\ 0.9\right]\times\left[0.5\ ,\ 0.7\right]$	&$1.0214\mbox{E+} 09$\\ \hline	

$\left[0.7\ ,\ 0.9\right]\times\left[0.3\ ,\ 0.5\right]$	&$8.1616\mbox{E+} 06$\\ \hline

$\left[0.7\ ,\ 0.9\right]\times\left[0.1\ ,\ 0.3\right]$	&$1.0023\mbox{E-} 01$\\ \hline

\end{tabular}
\end{center}
\caption{Different values of the $L^2$-error in function 
of the sensor's positioning for the first example.}	
\label{tab:1}
\end{table}

\begin{figure}
\centering
\begin{subfigure}[b]{0.49\textwidth}
\centering
\includegraphics[width=1.1\textwidth]{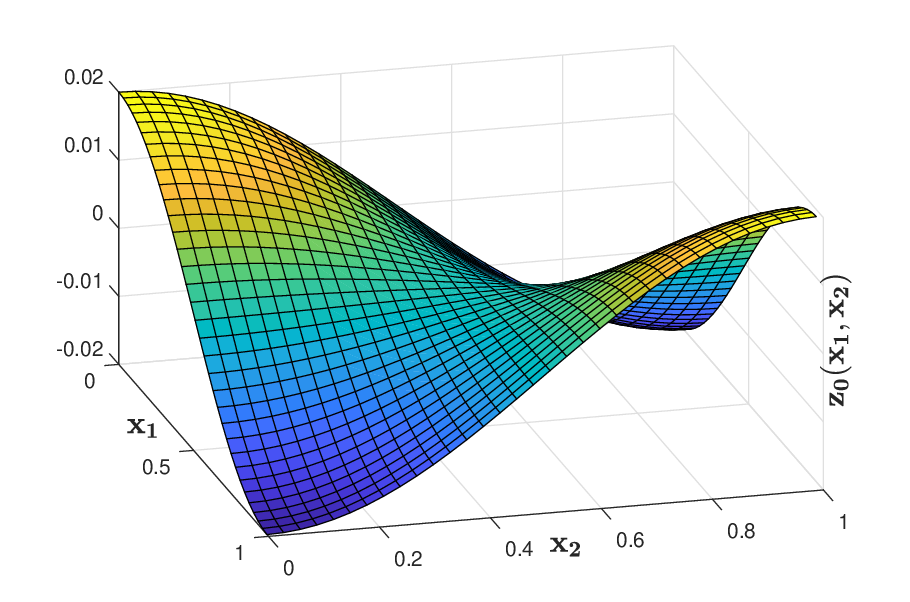}\\
\caption{The initial state $z_0$ in $\Omega$ for the first example.}
\label{fig:ex1_init}
\end{subfigure}
\hfill
\begin{subfigure}[b]{0.49\textwidth}
\centering
\includegraphics[width=1.1\textwidth]{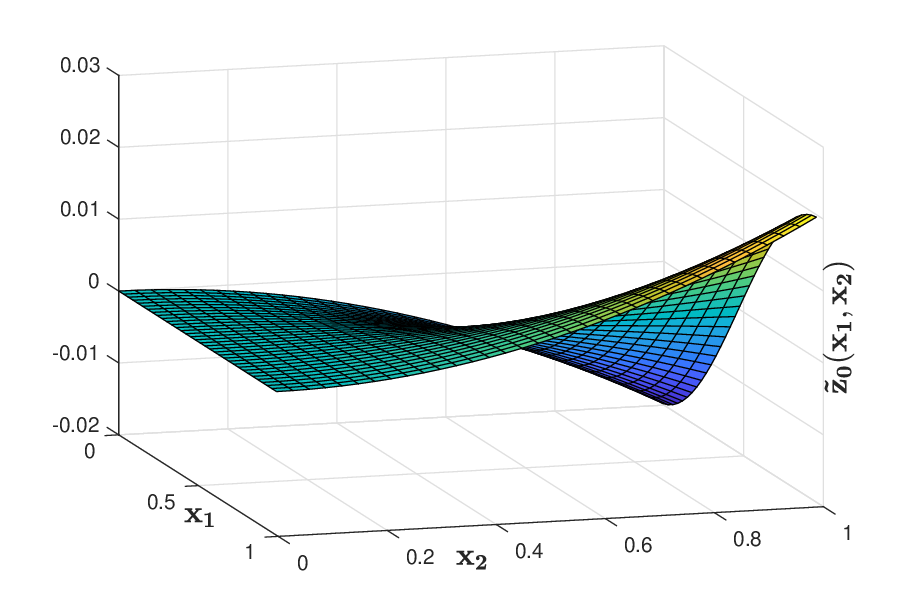}\\
\caption{The recovered initial state $\tilde{z}_0$ in $\Omega$ for the first example.}
\label{fig:ex1_init_rec}
\end{subfigure}
\caption{The real initial state and the recovered one in $\Omega$ for the first example}
\label{fig:ex1}
\end{figure}

\begin{figure}
\center{\includegraphics[width=11cm]{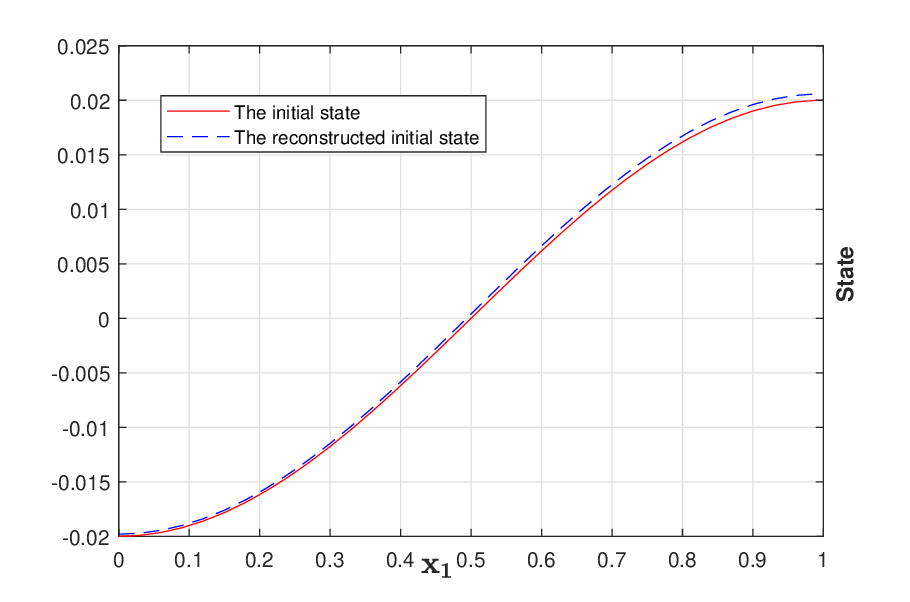}}
\caption{\label{fig:ex1_init_both} {The recovered initial state $\tilde{z}_0$ 
and the real one $z_0$ in the boundary sub-region $\Sigma$ for the first example.}}
\end{figure}


\subsubsection*{Example 2}

For this second example, we consider the Fisher-KPP equation which describes 
the evolution in time of the spread of the density for some epidemics or populations 
diffusing and reacting at the same time. We consider a normalized unit square domain 
$\Omega = ]0,1[\times]0,1[$ and we assume that the system is isolated from outside 
the domain in order to impose homogeneous Newton boundary conditions. 
The system has the following form:
\begin{equation}
\label{example2}
\left\{\begin{array}{lll}
^{^C}\mathcal{D}_{_{0^+}}^{^\alpha} z(x_1,x_2,t) 
= d\Delta z(x_1,x_2,t)  +  rz(x_1,x_2,t)\left(1-z(x_1,x_2,t)\right) 
& in \ \Omega\times]0,1] , \\
\dfrac{\partial z}{\partial\nu}(\xi_1,\xi_2,t) = 0 
& on \ \partial\Omega\times]0,1], \\ 
z(x_1,x_2,0) = z_0(x_1,x_2) & in \ \Omega.
\end{array}  \right.
\end{equation}
 
The parameters of the system are considered $d=0.4$ and $r=0.9$. The measurements 
in this second example are given by a pointwise sensor $(c,\delta_c)$; that is, 
$c = (c_1,c_2)\in \Omega$  is the spatial position of the sensor, $\delta_c$ 
is the Dirac delta mass centered at $c$, the observation space is 
$\mathcal{O}=\mathbb{R}$, and the output function is written,
$$
\varrho(t) = z(c_1,c_2,t).
$$
We set the parameters values as follows, the sensor's position $c = (0.1\ ,\ 0.7)$, 
the differentiation order $\alpha = 0.25$, the desired sub-region 
$\Sigma = \left\{1\right\}\times]0,1[$, and the initial state to be observed,  
$$ 
z_0(x_1,x_2) = (x_1x_2)^2e^{-2x_1}\left(1-x_2\right)^2.
$$

By applying the proposed algorithm, we obtain the Figures~\ref{fig:ex2} and \ref{fig:ex2_init_both}. 
The sub-figures \ref{fig:ex2_init} and \ref{fig:ex2_init_rec} show, respectively, 
the real initial state and the reconstructed initial state. Even though the two plots 
\ref{fig:ex2_init} and \ref{fig:ex2_init_rec} seem to differ hugely from one another, 
they coincide with each other on the desired boundary sub-region. In fact, if we consider 
$x_1 = 1$, we can see in Figure~\ref{fig:ex2_init_both} that the initial state 
(the red continuous plot) and the recovered initial state (the blue dotted plot) 
are nearly identical. The $L^2$-error for the second example is:
$$
\| z_0 - \tilde{z}_0 \|_{_{L^2(\Sigma)}} = 1.7852\mbox{E-} 04.
$$

\begin{figure}
\centering
\begin{subfigure}[b]{0.49\textwidth}
\centering
\includegraphics[width=1.1\textwidth]{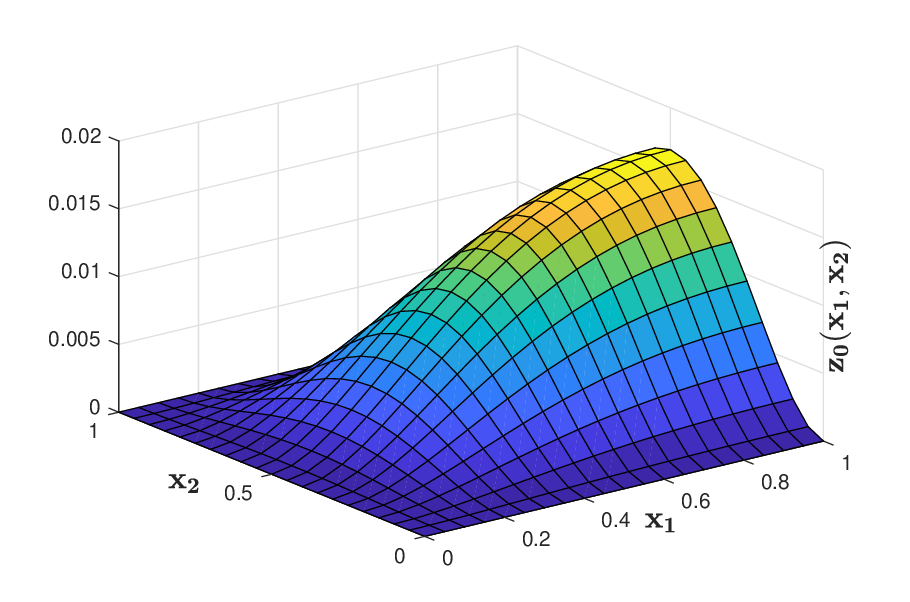}\\
\caption{The initial state $z_0$ in $\Omega$ for the second example.}
\label{fig:ex2_init}
\end{subfigure}
\hfill
\begin{subfigure}[b]{0.49\textwidth}
\centering
\includegraphics[width=1.1\textwidth]{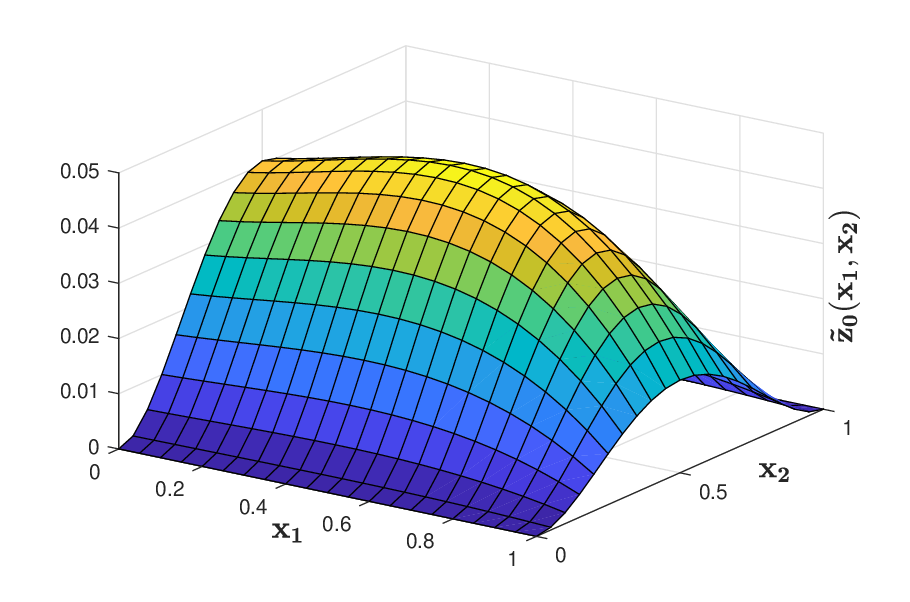}
\\
\caption{The recovered initial state $\tilde{z}_0$ in $\Omega$ for the second example.}
\label{fig:ex2_init_rec}
\end{subfigure}
\caption{The real initial state and the recovered one in $\Omega$ for the second example}
\label{fig:ex2}
\end{figure}

\begin{figure}
\center{\includegraphics[width=11cm]{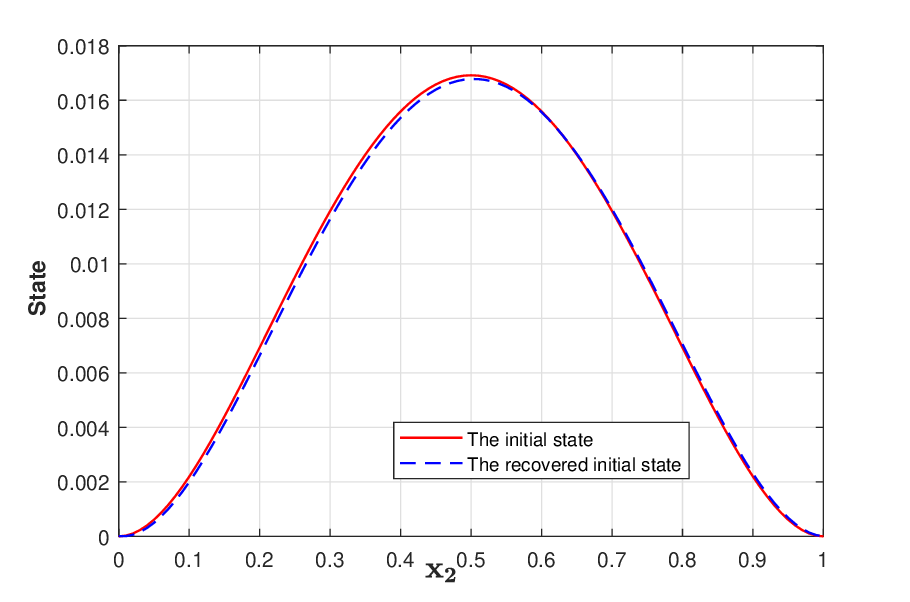}}
\caption{\label{fig:ex2_init_both} {The recovered initial state $\tilde{z}_0$ and the real 
one $z_0$ in the boundary sub-region $\Sigma$ for the Second example.}}
\end{figure}

Similarly to the first example, we can observe that the reconstruction error in 
the second example is also satisfying. Moreover, the same discussion regarding 
the positioning of the sensors and their effect on the reconstruction error 
is applicable, and this is shown in Figure~\ref{fig:ex2_evol}.

\begin{figure}
\centering
\begin{subfigure}[b]{0.49\textwidth}
\centering
\includegraphics[width=1.1\textwidth]{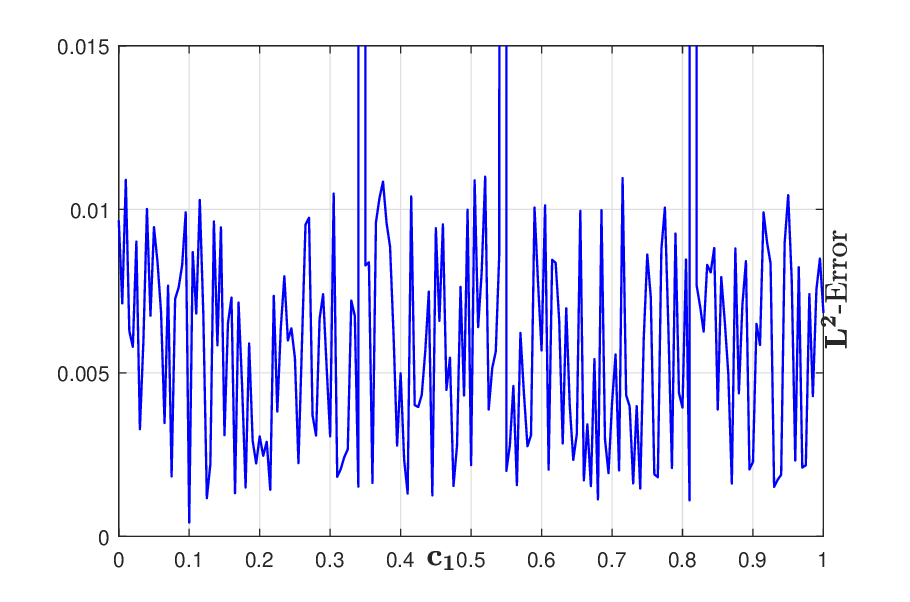}\\
\caption{The evolution of the $L^2$-error in function 
of the first coordinate $c_1$, with $c_2 = 0.7$.}
\label{fig:ex2_evol_x1}
\end{subfigure}
\hfill
\begin{subfigure}[b]{0.49\textwidth}
\centering
\includegraphics[width=1.1\textwidth]{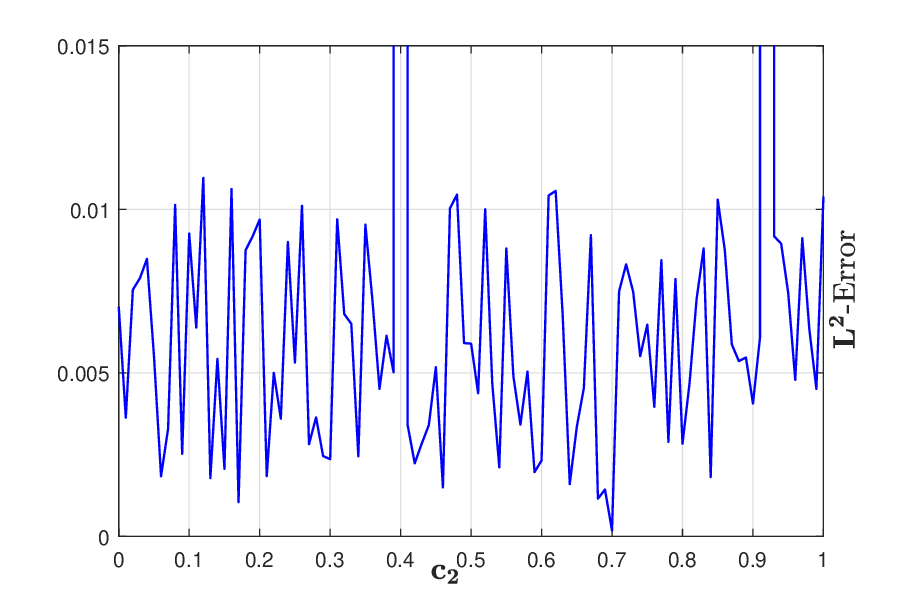}\\
\caption{The evolution of the $L^2$-error in function 
of the second coordinate $c_2$, with $c_1 = 0.1$.}
\label{fig:ex2_evol_x2}
\end{subfigure}
\caption{The evolution of the $L^2$-error versus the location of $c$.}
\label{fig:ex2_evol}
\end{figure}

Firstly, by fixing the value $c_2 = 0.7$ and varying the value of $c_1$ 
over the interval $[0,1]$, we get in the sub-figure \ref{fig:ex2_evol_x1} 
the evolution of the $L^2$-error. After that, we fix $c_1 = 0.1$  
and we vary $c_2$ in order to obtain the sub-figure \ref{fig:ex2_evol_x2}. 
As we can see, the sensor positions $(0.34\ ,\ 0.7),\ (0.55\ ,\ 0.7),\ 
(0.81\ ,\ 0.7),\ (0.1\ ,\ 0.4),$ and $(0.1\ ,\ 0.92)$ are non-strategic 
since the error explodes. Note that there might be other positions of $c$ 
which are also non-strategic, but they were not accounted for by the adopted 
subdivision in the numerical implementation of the example. Again, more 
investigations regarding the theoretical aspects of the relationship 
between the reconstruction error and the sensors' location are needed. 


\section{Conclusion}

The reconstruction problem of the initial state is very important especially in evolution problems 
because it allows us to determine the evolution of the considered system with respect to time. 
We tackled, in this manuscript, the reconstruction problem of the initial state on a boundary 
sub-region $\Sigma$ for a semilinear time-fractional system with the Caputo derivative. 
Under the assumption that the linear part \eqref{sys.lin} is approximately $\omega$-observable, 
we prove that \eqref{sys.semi.lin} is $\mathcal{B}$-observable on $\Sigma$. In order to reconstruct 
the initial state on $\Sigma$, we firstly reconstructed it in $\omega$ by using the results of 
regional observability for semilinear systems; after that we took the restriction on $\Sigma$ 
of its trace on $\partial\omega$ in order to obtain the initial state on $\Sigma$. The condition 
of approximate regional observability of \eqref{sys.lin} is irreplaceable in this paper because 
all the proofs are based on it, but in future works, we are looking to eliminate this condition. 
Many more future directions can be derived from this work. One is the theoretical study regarding 
the relationship between the sensors' location and the $L^2$-Error generated by the proposed algorithm. 
Another one is to investigate the same problem at hand but with a different fractional derivative, 
namely the Atangana--Baleanu or the $\psi$ fractional derivatives, in order to give a comparison 
between the different considered derivatives.    


\subsection*{Author Contributions}

All authors contributed to this work, commented on previous versions of the manuscript, 
read, and approved the final manuscript. K.Z. wrote the original draft preparation; 
K.Z., F.Z.E.A., and D.F.M.T. conceptualized the study;  
K.Z., F.Z.E.A., and D.F.M.T. helped in methodology, 
formal analysis and investigation, writing—review and editing.

\subsection*{Funding}

Torres is supported by CIDMA under the Portuguese Foundation 
for Science and Technology (FCT), Grant UID/04106/2025 
(\url{https://doi.org/10.54499/UID/04106/2025}).

\subsection*{Data availability statement}

No datasets were generated or analyzed during the current study.

\subsection*{Conflict of interest}

The authors declare no conflict of interest.




\begin{thebibliography}{xx}


\bibitem{adams}
Adams, R.A., Fournier, J.J.F.:
\newblock Sobolev Spaces.
\newblock Elsevier (2003).

\bibitem{reg}
Amouroux, M., El Jai, A., Zerrik, E.:
\newblock Regional observability of distributed systems.
\newblock {\em Int. J. Syst. Sci.} 25(2), 301–313 (1994). 
\url{https://doi.org/10.1080/00207729408928961}

\bibitem{ardent}
Arendt, W., Batty, C.J.K., Hieber, M., Neubrander, F.:
\newblock Vector-valued Laplace Transforms and Cauchy Problems.
\newblock Birkhäuser, Basel (2011). 
\url{https://doi.org/10.1007/978-3-0348-0087-7}

\bibitem{app5}
Arshad, S., Baleanu, D., Tang, Y.:
\newblock Fractional differential equations with bio-medical applications.
\newblock In: Handbook of Fractional Calculus with Applications: 
Applications in Engineering, Life and Social Sciences, Part A, 
Berlin, Boston: De Gruyter, pp. 1–20 (2019). 
\url{https://doi.org/10.1515/9783110571905-001}

\bibitem{boun.semi1}
Boutoulout, A., Bourray, H., El Alaoui, F.Z.:
\newblock Regional Boundary Observability 
for Semi-Linear Systems: Approach and Simulation.
\newblock {\em Int. J. Math. Anal.} 4(24), 1153–1173 (2010)

\bibitem{boun.semi2}
Boutoulout, A., Bourray, H., El Alaoui, F.Z.:
\newblock Regional boundary observability of semilinear hyperbolic systems: sectorial approach.
\newblock {\em IMA J. Math. Control Inf.} 32(3), 497–513 (2015). 
\url{https://doi.org/10.1093/imamci/dnu004}

\bibitem{curtain}
Curtain, R.F., Zwart, H.:
\newblock An Introduction to Infinite-Dimensional Linear Systems Theory.
\newblock Springer-Verlag, New York (1995). 
\url{https://doi.org/10.1007/978-1-4612-4224-6}

\bibitem{sup.2}
Dadkhah, E., Shiri, B., Ghaffarzadeh, H., Baleanu, D.:
\newblock Visco-elastic dampers in structural buildings 
and numerical solution with spline collocation methods.
\newblock {\em J. Appl. Math. Comput.} 63, 29–57 (2020). 
\url{https://doi.org/10.1007/s12190-019-01307-5}

\bibitem{sup.1}
Dadkhah Khiabani, E., Ghaffarzadeh, H., Shiri, B., Katebi, J.:
\newblock Spline collocation methods for seismic analysis of 
multiple degree of freedom systems with visco-elastic dampers using fractional models.
\newblock {\em J. Vib. Control} 26(17–18), 1445–1462 (2020). 
\url{https://doi.org/10.1177/1077546319898570}


\bibitem{me.semi.2021}
El Alaoui, F.Z., Boutoulout, A., Zguaid, K.:
\newblock Regional Reconstruction of Semilinear Caputo 
Type Time-Fractional Systems Using the Analytical Approach.
\newblock {\em Adv. Theory Nonlinear Anal. Appl.} 5(4), Art. no. 4 (2021). 
\url{https://doi.org/10.31197/atnaa.799236}

\bibitem{capetact}
El Jai, A.:
\newblock Capteurs et actionneurs dans l’analyse des systèmes distribués.
\newblock Elsevier Masson, Paris (1997)

\bibitem{RegAnal}
Ge, F., Chen Quan, Y., Kou, C.:
\newblock Regional Analysis of Time-Fractional Diffusion Processes.
\newblock Springer, Cham (2018). 
\url{https://doi.org/10.1007/978-3-319-72896-4}

\bibitem{geo}
Henry, D.:
\newblock Geometric Theory of Semilinear Parabolic Equations.
\newblock Springer-Verlag, Berlin Heidelberg (1981). 
\url{https://doi.org/10.1007/BFb0089647}


\bibitem{app3}
Jafari, H., Mehdinejadiani, B., Baleanu, D.:
\newblock Fractional calculus for modeling unconfined groundwater.
\newblock In: Handbook of Fractional Calculus with Applications: 
Applications in Engineering, Life and Social Sciences, 
Part A, Berlin, Boston: De Gruyter, pp. 119–138 (2019). 
\url{https://doi.org/10.1515/9783110571905-007}

\bibitem{kalman}
Kalman, R.E.:
\newblock On the general theory of control systems.
\newblock {\em IFAC Proc. Vol.} 1(1), 491–502 (1960). 
\url{https://doi.org/10.1016/S1474-6670(17)70094-8}

\bibitem{lions.mag}
Lions, J.L., Magenes, E.:
\newblock Non-Homogeneous Boundary Value Problems and Applications, vol. 1.
\newblock Springer-Verlag, Berlin Heidelberg (1972)

\bibitem{app2}
Litak, G., Kwuimy, C.A.K., Ducharne, B.:
\newblock Energy harvesting in dynamical systems with fractional-order physical properties.
\newblock In: Handbook of Fractional Calculus with Applications: Applications 
in Engineering, Life and Social Sciences, Part B, 
Berlin, Boston: De Gruyter, pp. 63–86 (2019). 
\url{https://doi.org/10.1515/9783110571929-003}


\bibitem{app1}
Moghaddam, B.P., Dabiri, A., Machado, J.A.T.:
\newblock Application of variable-order fractional calculus in solid mechanics.
\newblock In: Handbook of Fractional Calculus with Applications: Applications 
in Engineering, Life and Social Sciences, Part A, 
Berlin, Boston: De Gruyter, pp. 207–224 (2019). 
\url{https://doi.org/10.1515/9783110571905-011}

\bibitem{Mu.2017}
Mu, J., Ahmad, B., Huang, S.:
\newblock Existence and regularity of solutions to time-fractional diffusion equations.
\newblock {\em Comput. Math. Appl.} 73(6), 985–996 (2017). 
\url{https://doi.org/10.1016/j.camwa.2016.04.039}

\bibitem{pazy}
Pazy, A.:
\newblock Semigroups of Linear Operators and Applications 
to Partial Differential Equations.
\newblock Springer-Verlag, New York (1983)

\bibitem{appC}
Petráš, I.:
\newblock Handbook of Fractional Calculus with Applications: Applications in Control.
\newblock De Gruyter, Berlin, Boston (2019). 
\url{https://doi.org/10.1515/9783110571745}


\bibitem{app4}
Rossikhin, Y., Shitikova, M.:
\newblock Fractional calculus models in dynamic problems of viscoelasticity.
\newblock In: Handbook of Fractional Calculus with Applications: 
Applications in Engineering, Life and Social Sciences, Part A, 
Berlin, Boston: De Gruyter, pp. 139–158 (2019). 
\url{https://doi.org/10.1515/9783110571905-008}

\bibitem{sup.3}
Shiri, B., Baleanu, D.:
\newblock All linear fractional derivatives with power functions’ 
convolution kernel and interpolation properties.
\newblock {\em Chaos Soliton. Fract.} 170, 113399 (2023). 
\url{https://doi.org/10.1016/j.chaos.2023.113399}

\bibitem{weiss}
Tucsnak, M., Weiss, G.:
\newblock Observation and Control for Operator Semigroups.
\newblock Birkhäuser, Basel (2009). 
\url{https://doi.org/10.1007/978-3-7643-8994-9}



\bibitem{bound1}
Zerrik, E., Bourray, H., Boutoulout, A.:
\newblock Regional boundary observability: A numerical approach.
\newblock {\em Int. J. Appl. Math. Comput. Sci.} 12(2), 143–151 (2002)

\bibitem{bound2}
Zerrik, E., Badraoui, L., El Jai, A.:
\newblock Sensors and regional boundary state reconstruction of parabolic systems.
\newblock {\em Sens. Actuators A Phys.} 75(2), 102–117 (1999). 
\url{https://doi.org/10.1016/S0924-4247(98)00293-3}

\bibitem{me.bound.lin.2022.2}
Zguaid, K., El Alaoui, F.Z.:
\newblock Regional boundary observability for Riemann–Liouville linear fractional evolution systems.
\newblock {\em Math. Comput. Simul.} 199, 272–286 (2022). 
\url{https://doi.org/10.1016/j.matcom.2022.03.023}

\bibitem{me.semi.lin.2023}
Zguaid, K., El Alaoui, F.Z.:
\newblock Regional Boundary Observability for Semilinear 
Fractional Systems with Riemann-Liouville Derivative.
\newblock {\em Numer. Funct. Anal. Optim.} 44(5), 420–437 (2023). 
\url{https://doi.org/10.1080/01630563.2023.2171055}

\bibitem{me.2023.chap}
Zguaid, K., El Alaoui, F.Z.:
\newblock The Regional Observability Problem for a Class of Semilinear 
Time-Fractional Systems With Riemann-Liouville Derivative.
\newblock In: Debnath, P., Torres, D.F.M., Cho, Y.J. (eds.) Advanced Mathematical 
Analysis and its Applications, pp. 251–264. CRC Press, Boca Raton (2023). 
\url{https://doi.org/10.1201/9781003388678-15}

\bibitem{me.2023.ajc}
Zguaid, K., El Alaoui, F.Z., Boutoulout, A.:
\newblock Regional observability of Caputo semilinear fractional systems.
\newblock {\em Asian J. Control} (2023). 
\url{https://doi.org/10.1002/asjc.3218}

\bibitem{me.2023.ijdyc}
Zguaid, K., El Alaoui, F.Z., Torres, D.F.M.:
\newblock Regional gradient observability for fractional 
differential equations with Caputo time-fractional derivatives.
\newblock {\em Int. J. Dyn. Control} 11(5), 2423–2437 (2023). 
\url{https://doi.org/10.1007/s40435-022-01106-0}

\end{thebibliography}
\end{document}